 \newtheorem{TheoremX}{Theorem}
 \newtheorem{The}{Theorem}[section]
 \newtheorem{Lem}[The]{Lemma}
 \newtheorem{Pro}[The]{Proposition}
 \theoremstyle{definition}
 \newtheorem{Def}[The]{Definition}
 \theoremstyle{remark}
 \newtheorem{Rem}[The]{Remark}
 \numberwithin{equation}{section}
\newcommand{\T}{\mathbb{T}}
\newcommand{\R}{\mathbb{R}}
\newcommand{\Z}{\mathbb{Z}}
\newcommand{\N}{\mathbb{N}}
\begin{document}
\title[]{On class A Lorentzian 2-tori with poles II:\\ Foliations by timelike lines}

\author{Liang Jin, Lu Peng \and Xiaojun Cui}
\address{Department of Mathematics, Nanjing University of Science and Technology, Nanjing 210094, China}
\email{jl@njust.edu.cn}
\address{Department of Mathematics, Nanjing University,
Nanjing 210093, China}
\email{xcui@nju.edu.cn}
\address{Department of Mathematics, Nanjing University,
Nanjing 210093, China}
\email{penglu1991@gmail.com}
\subjclass[2010]{53B30, 53C22, 53C50}
\date{\today}
\keywords{class A Lorentzian 2-torus; timelike pole; Lipschitz foliation; timelike geodesic.}

\thanks{The first author is supported by the National Natural Science Foundation of China (Grants 11571166) and Start-up Foundation of Nanjing University of Science and Technology (No. AE89991/114). The third author is supported by the National Natural Science Foundation of China (Grants 11571166, 11631006, 11790272), the Project Funded by the Priority Academic Program Development of Jiangsu Higher Education Institutions (PAPD) and the Fundamental Research Funds for the Central Universities.}

\begin{abstract}
  In this paper, we show that if $(\mathbb{T}^{2},g)$ is a class A Lorentzian 2-torus with timelike poles, then there exists a Lipschitz foliation by complete future-directed timelike geodesics with any pre-assigned asymptotic direction in the interior of the stable time cone. This is done by constructing certain $C^{1,1}$ solutions to the equation $g(\nabla u,\nabla u)=-1$ on the Abelian cover $(\mathbb{R}^{2},g)$.
\end{abstract}
\maketitle

\section{Introduction}
This paper continues our study of dynamics of geodesics on class A Lorentzian 2-tori with poles initiated in \cite{Peng-Jin-Cui}. Instead of studying all timelike geodesics, we shall focus on certain geodesics whose lifts to the covering plane are complete timelike maximizers and possess a fixed ``timelike slope''. With the insight of Bangert, Schelling established the existence and order structure of such timelike geodesics in \cite{Sc} by generalizing Aubry-Mather theory to the case of class A Lorentzian 2-tori. However, in general, the timelike geodesics that we are interested in only constitute a topologically sparse set on the 2-torus.

Recently, Bangert \cite{Bangert10} showed the existence of foliations by lifted minimal geodesics with any prescribed homological direction for Riemannian 2-torus when poles arise. Due to similarity between the globally variational theories of Riemannian and class A Lorentzian 2-tori, we expect a corresponding phenomenon occurs for the latter case, which we now try to state.

First, we recall the notion of pole in the setting of Lorentzian geometry.
\begin{Def}
Let $(M,g)$ be a Lorentzian manifold, a point $p$ on $(M,g)$ is a timelike (resp. spacelike) pole if no timelike (resp. spacelike) geodesic $\gamma:[0,a)\rightarrow M$ starting from $p=\gamma(0)$ contains a pair of conjugate points.
\end{Def}

Let $(\mathbb{T}^{2},g)$ be a Lorentzian 2-torus and $\pi:\mathbb{R}^{2}\rightarrow\mathbb{T}^{2}$ its Abelian cover. We use the same symbol $g$ to denote the lifted Lorentz metric on $\mathbb{R}^{2}$. A Lorentzian 2-torus $(\mathbb{T}^{2},g)$ is class A if it is totally vicious and its Abelian cover $(\mathbb{R}^{2},g)$ is globally hyperbolic.

We say a future directed inextendible causal curve on $(\R^2,g)$, the Abelian cover of a class A Lorentzian 2-torus, has asymptotic direction if its distance from some straight line through the origin is bounded. It has been proved in \cite{Sc},\cite{Su-1} that the slopes of such straight lines forms an closed interval $[m^{-},m^+]$ and we denote its interior by $(m^{-},m^+)$.

With the terminologies introduced, our main result can be formulated into
\begin{TheoremX}\label{Th1}
Let $(\R^2,g)$ be the Abelian cover of a class A Lorentzian 2-torus having a timelike pole. For every asymptotic direction $\alpha\in(m^-,m^+)$, there exists a solution $u_{\alpha}\in C^{1}(\R^2,\R)$ to the equation
\begin{equation}\label{te}
g(\nabla u,\nabla u)=-1
\end{equation}
such that $\nabla u_{\alpha}$ is a $\Z^2$-invariant Lipschitz vector field and the integral curves of $\nabla u_{\alpha}$ are future-directed timelike geodesics having asymptotic direction $\alpha$.
\end{TheoremX}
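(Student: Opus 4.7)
The plan is to construct $u_{\alpha}$ as a (timelike) Busemann function associated with the asymptotic direction $\alpha$, in direct analogy with Bangert's Riemannian construction and with classical weak KAM theory, and then to exploit the pole hypothesis to upgrade its regularity. Concretely, fix a direction $\alpha\in(m^-,m^+)$ and, using the Aubry--Mather type timelike minimizers produced by Schelling whose lifts have asymptotic slope $\alpha$, define
\begin{equation*}
u_{\alpha}(x)\;=\;\lim_{t\to\infty}\Bigl(d\bigl(x,\gamma_{n}(t_{n})\bigr)-t_{n}\Bigr),
\end{equation*}
where $d$ denotes the Lorentzian time separation on the globally hyperbolic cover $(\R^{2},g)$ and $\gamma_{n}$ is a sequence of complete future-directed timelike maximizers with slope converging to $\alpha$. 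The class A and global hyperbolicity assumptions, together with the stable time cone structure from \cite{Sc,Su-1} (and the refined estimates from the first paper \cite{Peng-Jin-Cui}), ensure the limit exists and is finite, and that $u_{\alpha}$ is locally Lipschitz with timelike gradient almost everywhere.

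Next I would check that $u_{\alpha}$ is $\Z^{2}$-equivariant in the sense $u_{\alpha}(x+k)=u_{\alpha}(x)+\langle\alpha',k\rangle$ for a suitable cohomology class $\alpha'$ determined by $\alpha$; this follows because the Busemann construction depends only on the asymptotic direction, and the torus deck group acts by translations. Consequently $\nabla u_{\alpha}$, once it is defined, is automatically $\Z^{2}$-invariant and descends to a vector field on $\T^{2}$. The equation $g(\nabla u_{\alpha},\nabla u_{\alpha})=-1$ is obtained first in the viscosity / almost everywhere sense from the fact that $u_{\alpha}$ is a generalized Lorentzian distance function along each maximizer; the standard computation $\nabla_{\nabla u}\nabla u=\tfrac{1}{2}\nabla\bigl(g(\nabla u,\nabla u)\bigr)=0$ will then force the integral curves of $\nabla u_{\alpha}$ to be future-directed timelike geodesics, automatically maximizing with asymptotic direction $\alpha$.

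The heart of the argument, and the main obstacle, is the regularity step: upgrading $u_{\alpha}$ from a merely Lipschitz viscosity solution to a genuine $C^{1}$ solution with Lipschitz gradient. This is where the timelike pole hypothesis enters decisively. At any point $x\in\R^{2}$, differentiability of $u_{\alpha}$ is equivalent to uniqueness of the backward calibrated timelike ray with slope $\alpha$ emanating from $x$. The pole hypothesis says that no timelike geodesic issuing from $x$ contains conjugate points, so the timelike exponential map at $x$ is a local diffeomorphism onto a neighborhood of its image; this, combined with the Aubry--Mather order structure on the plane (two maximizers with the same asymptotic slope cannot cross transversally in $(\R^{2},g)$), rules out branching of calibrated curves through $x$ and yields uniqueness, hence $C^{1}$ regularity. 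The no-conjugate-points statement additionally gives a uniform second-variation estimate along each maximizer, which translates into semiconcavity of $u_{\alpha}$ on the whole plane; combined with the one-sided Hessian bound from the eikonal equation this produces the $C^{1,1}$ regularity, i.e.\ Lipschitz continuity of $\nabla u_{\alpha}$.

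Finally I would verify that the foliation statement falls out automatically: since $\nabla u_{\alpha}$ is a Lipschitz timelike vector field satisfying $g(\nabla u_{\alpha},\nabla u_{\alpha})=-1$, its integral curves form a Lipschitz foliation of $\R^{2}$ (Cauchy--Lipschitz), each leaf is a future-directed unit-speed geodesic by the computation above, completeness of leaves comes from the $\Z^{2}$-invariance together with the uniform timelike character of $\nabla u_{\alpha}$ on the compact quotient $\T^{2}$, and the asymptotic direction of each leaf is $\alpha$ because leaves are calibrated for $u_{\alpha}$. The heaviest lifting in the entire proof is the pole-based no-branching/semiconcavity argument in paragraph three; the rest is a fairly standard transposition of Riemannian weak KAM and Busemann techniques to the Lorentzian class A setting.
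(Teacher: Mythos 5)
Your overall strategy (a Busemann-type limit of Lorentzian distance functions, calibration to show the integral curves are maximizing geodesics, and a pole-based regularity upgrade) is the same in spirit as the paper's, but two of your key steps have genuine gaps. The most serious is the regularity argument in your third paragraph: you invoke ``the pole hypothesis says that no timelike geodesic issuing from $x$ contains conjugate points'' for an \emph{arbitrary} point $x$, but the theorem only assumes the existence of \emph{one} timelike pole $p$ (hence its $\Z^{2}$-orbit $\{T_{\mathbf k}(p)\}$); generic points of $\R^{2}$ need not be poles, so the local diffeomorphism property of $\exp_{x}$ and the ``no branching at $x$'' conclusion are not available. Relatedly, ``semiconcavity plus the one-sided Hessian bound from the eikonal equation'' does not yield $C^{1,1}$: semiconcavity is the one-sided bound that comes for free from upper support functions, and the missing \emph{lower} bound is exactly where the pole must enter. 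The paper's route is to base the approximating functions $u_{i}=t_{i}-d(T_{\mathbf k_{i}}(p),\cdot)$ at translates of the pole receding to past infinity along $\alpha$; since each $T_{\mathbf k_{i}}(p)$ is a pole, $d_{T_{\mathbf k_{i}}(p)}$ is $C^{1}$ and admits \emph{both} an upper and a lower smooth support function at every $x$ (the lower one built from $d(\gamma(\tau-1),\cdot)$ along the unique maximizing geodesic from the pole, which is free of cut points precisely because the base point is a pole), and the Lorentzian Hessian comparison theorem plus the compactness results of Suhr give a \emph{uniform} two-sided Hessian bound (Proposition \ref{prop2}), so that Ascoli--Arzel\`a applies to the gradients (Proposition \ref{prop3}). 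Without locating the pole at the base points of your approximating distance functions, your construction does not produce the Lipschitz bound on $\nabla u_{\alpha}$.

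The second gap is the $\Z^{2}$-equivariance claim. You assert that the Busemann function ``depends only on the asymptotic direction,'' so that invariance of $\nabla u_{\alpha}$ is automatic. For rational $\alpha$ this is false in general: when the periodic timelike lines of direction $\alpha$ do not foliate the plane, different base rays give genuinely different solutions (this is exactly the non-uniqueness phenomenon recorded in Remark \ref{rm2}), and a Busemann function based at a single ray need not satisfy $u_{\alpha}(x+\mathbf k)=u_{\alpha}(x)+c(\mathbf k)$. The paper proves invariance only for irrational $\alpha$, using that two distinct timelike lines with the same irrational direction are disjoint (Theorem \ref{Sc-2}(2)), so the foliation by integral curves is canonical and hence deck-invariant (Proposition \ref{prop-irrational}); the rational case is then obtained by taking a $C^{1}$-limit of the solutions $u_{\alpha_{i}}$ for irrational $\alpha_{i}\to\alpha$, using the uniform Lipschitz bounds of Remark \ref{uniform Lip}. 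You need to either supply that approximation argument or restrict your equivariance claim to irrational directions.
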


There are a couple of things to note about Theorem \ref{Th1}:
\begin{Rem}\label{rm1}
It is easily seen, for instance Proposition \ref{prop1}, that every integral curve of $\nabla u_{\alpha}$ maximizes Lorentzian arclength between any two of its points. Such curves are called maximal and, by definition, have to be future-directed timeike geodesics.

Besides this, any integral curve of $\nabla u_{\alpha}$ is complete as a timelike geodesic since it has a timelike asymptotic direction $\alpha$, see Section 2.
\end{Rem}

\begin{Rem}\label{rm2}
If the asymptotic direction $\alpha$ is irrational, then $u_{\alpha}$ obtained by Theorem \ref{Th1} is unique up to a constant and the integral curves of $\nabla u_{\alpha}$ contains all future directed timelike maximal geodesics with asymptotic direction $\alpha$.

However, if $\alpha$ is rational, then $u_{\alpha}$ is unique up to a constant if and only if any of the future-directed timelike maximal geodesics with asymptotic direction $\alpha$ projects to closed geodesic under $\pi$. This phenomenon already occurs in Riemannian cases \cite{Bangert94}, \cite{Bangert10}, also compare our paper \cite{Jin-Cui}.
\end{Rem}

We say a future directed inextendible causal curve on a class A Lorentzian 2-torus $(\mathbb{T}^{2},g)$ has asymptotic direction if one of its lifts has. For any $u_{\alpha}$ obtained in Theorem \ref{Th1}, $\nabla u_{\alpha}$ is $\mathbb{Z}^{2}$-periodic, thus the integral curves of $\nabla u_{\alpha}$ project to geodesics on $\mathbb{T}^{2}$. This fact leads to the following immediate consequence of Theorem \ref{Th1}.

\begin{TheoremX}\label{Th2}
If $(\mathbb{T}^{2},g)$ has a timelike pole, then for every asymptotic direction $\alpha\in(m^-,m^+)$, there exists an oriented Lipschitz foliation of $\,\mathbb{T}^{2}$ by complete future-directed timelike geodesics with asymptotic direction $\alpha$. Moreover, any timelike geodesic constituting the foliation lifts to a timelike line on $(\mathbb{R}^{2},g)$.
\end{TheoremX}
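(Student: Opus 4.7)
The plan is to deduce Theorem \ref{Th2} directly from Theorem \ref{Th1} by descending the gradient vector field from the Abelian cover to $\mathbb{T}^2$. Fix $\alpha\in(m^-,m^+)$ and let $u_\alpha\in C^1(\mathbb{R}^2,\mathbb{R})$ be the function provided by Theorem \ref{Th1}. Since $\nabla u_\alpha$ is $\mathbb{Z}^2$-invariant, it pushes forward under $\pi$ to a well-defined Lipschitz vector field $V_\alpha$ on $\mathbb{T}^2$; the identity $g(\nabla u_\alpha,\nabla u_\alpha)=-1$ guarantees that $V_\alpha$ is nowhere zero and everywhere future-directed timelike.

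Next I would invoke the Cauchy--Lipschitz theorem on the compact manifold $\mathbb{T}^2$: through each point of $\mathbb{T}^2$ there passes a unique maximal integral curve of $V_\alpha$, and because $V_\alpha$ is a nowhere vanishing Lipschitz vector field these curves assemble into a $1$-dimensional oriented Lipschitz foliation $\mathcal{F}_\alpha$ of $\mathbb{T}^2$, the orientation being supplied by $V_\alpha$ itself. A standard covering-space lifting argument then identifies each leaf of $\mathcal{F}_\alpha$ with the $\pi$-projection of a unique integral curve of $\nabla u_\alpha$ on $(\mathbb{R}^2,g)$.

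Finally, I would read off the remaining geometric content from Theorem \ref{Th1} together with Remark \ref{rm1}: each such lift is a complete future-directed timelike geodesic with asymptotic direction $\alpha$ which maximizes Lorentzian arclength between any two of its points, hence is a timelike line on $(\mathbb{R}^2,g)$; this is precisely the last sentence of Theorem \ref{Th2}. Since $\pi$ is a local isometry, completeness and the timelike-geodesic property descend to the leaves of $\mathcal{F}_\alpha$, while asymptotic direction descends by definition, as on $\mathbb{T}^2$ it means that some lift has asymptotic direction $\alpha$.

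The whole argument is essentially bookkeeping once Theorem \ref{Th1} is granted. The only point that requires any care is checking that non-vanishing together with Lipschitz regularity of $V_\alpha$ yields a genuine foliation rather than a singular decomposition, but this is a routine consequence of Picard--Lindelöf on a compact manifold and is not a real obstacle.
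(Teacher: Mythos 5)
Your proposal is correct and follows essentially the same route as the paper, which presents Theorem~\ref{Th2} as an immediate consequence of Theorem~\ref{Th1}: the $\Z^2$-invariant Lipschitz field $\nabla u_\alpha$ descends to $\mathbb{T}^2$, and uniqueness of integral curves of a Lipschitz vector field yields the foliation, with the geometric properties of the leaves read off from Theorem~\ref{Th1} and Remark~\ref{rm1}. Your explicit mention of Picard--Lindel\"of to rule out a singular decomposition is a reasonable elaboration of a step the paper leaves implicit.
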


The paper is organized as follows. In Section $2$, we introduce some facts about class A Lorentzian 2-torus that are useful in the proof of Theorem \ref{Th1}. In Section 3, we present necessary estimates and convergence proposition to construct desired global solution. Section 4 is devoted to the proof of our main theorem.

\section{Preliminaries}
In this section, we interpret elementary facts and previous results that are used later on. As in our previous paper \cite{Peng-Jin-Cui}, we always consider spacetimes (Lorentzian manifold with a time orientation) and future directed causal curves if there is no extra explanation. For more details, we recommend the readers to consult \cite{B-E-E},\cite{Sc},\cite{Su-1}-\cite{Su-4} and our former papers \cite{Jin-Cui},\cite{Peng-Jin-Cui}.

~\\
\subsection{Notations and definitions}
~\\
Let $\gamma:(a,b)\rightarrow M$ be a causal curve in a general spacetime $(M,g)$. The point $p$ is called a future (resp. past) endpoint of $\gamma$ if the limit $\lim_{t\rightarrow b^-}\gamma(t)$ (resp. $\lim_{t\rightarrow a^+}\gamma(t)$) exists and equals to $p$. A causal curve is called future (resp. past) inextendible if it has no future (resp. past) endpoint, and is called inextendible if it is both future and past inextendible. This leads to the following
\begin{Def}\label{Def_line}
A timelike (causal) geodesic $\gamma:[a,b)\rightarrow M$ is a timelike (causal) ray if it is future inextendible and maximal; a timelike (causal) geodesic $\gamma:(a,b)\rightarrow M$ is a timelike (causal) line if it is inextendible and maximal.
\end{Def}

Let $(\mathbb{T}^{2},g)$ be a class A Lorentzian 2-torus. Let $\pi:\mathbb{R}^{2}\rightarrow\mathbb{T}^{2}$ be the Abelian cover of the 2-torus, we shall denote the lift of $g$ on $\mathbb{R}^{2}$ by the same letter. The deck transformations $(\cong \Z^2)$ associated to $\pi$ act on $\R^2$ cocompactly. They are denoted by
\begin{equation}
T_{\mathbf{k}}:\R^2\rightarrow\R^2;\quad x\mapsto x+\mathbf{k}.
\end{equation}
Note that for every $\mathbf{k}\in\Z^2$, $T_\mathbf{k}$ is an isometry of $(\mathbb{R}^{2},g)$.

We denote $I^{\pm}(p),J^{\pm}(p)$ the usual chronological and causal sets, see also \cite{B-E-E} or \cite[Section 2.1]{Peng-Jin-Cui}; denote $L^g(\cdot)$ and $d(\cdot,\cdot)$ the length functional and distance function relative to $g$ respectively. Once and for all, we choose the standard Euclidean product metric on $\mathbb{T}^{2}:=\mathbb{T}\times\mathbb{T}$ or $\mathbb{R}^{2}:=\mathbb{R}\times\mathbb{R}$ as an auxiliary metric. Denote by $|\cdot|$ the norm of vectors relative to Euclidean metric and dist$_{|\cdot|}$ the distance induced by $|\cdot|$; denote by $L^{|\cdot|}(\cdot)$ the length functional relative to Euclidean metric. We do not clarify vectors on manifolds or tangent spaces since we use the Euclidean metric.

~\\
\subsection{Previous results on class A Lorentzian 2-tori}
~\\
To state results obtained in previous works, we recall the definitions of asymptotic direction. Let $S^{1}$ denote the unit circle on $(\mathbb{R}^{2},\text{dist}_{|\cdot|})$ and $\alpha\in S^{1}$. Set $\overline{\alpha}:=\{t\alpha\,\,|\,\,t\in[0,\infty)\}$.
\begin{Def}\label{def ap}
Let $\gamma:I\rightarrow \mathbb{R}^2$ be a future inextendible causal curve. If there exists constant $D<\infty$ and such that dist$_{|\cdot|}(\gamma(t)-\gamma(s),\overline{\alpha})\leq D$ for all $[s,t]\subseteq I$, we say $\gamma$ has an asymptotic direction $\alpha$. We call $\alpha\in S^1$ a rational asymptotic direction if $\overline{\alpha}\cap\mathbb{Z}^{2}$ contains a non-zero element. Otherwise, we call $\alpha$ irrational.
\end{Def}

In the following, $(\mathbb{R}^2,g)$ denotes the Abelian cover of a class A Lorentzian 2-torus. We orient the plane as usual, i.e., the counter-clockwise orientation is positive. Let $m^{\pm}$ be the asymptotic direction of the lifts of two families of future directed lightlike curves. The class A assumption implies that $m^-$ and $m^+$ are linearly independent, see \cite{Su-2}. We choose $\{m^-,m^+\}$ to be positively oriented. Now we set
$$
\mathfrak{T}=co(\overline{m}^-\cup \overline{m}^+),
$$
where $co(A)$ denotes the convex hull of $A\subseteq\mathbb{R}^{2}$. $\mathfrak{T}$ is called the stable time cone associated to $(\mathbb{T}^{2},g)$. From \cite[Proposition 8]{Su-1}, dist$_{|\cdot|}(J^+(x)-x,\mathfrak{T})$ is uniformly bounded over $x\in\mathbb{R}^{2}$, where $J^+(x)-x:=\{y-x\,\,|\,\,y\in J^+(x)\}$.

For any $\varepsilon>0$, we define
$$
\mathfrak{T}^\varepsilon:=\{h\in\mathfrak{T}\,\,|\,\,\text{dist}_{|\cdot|}(h,\partial\mathfrak{T})\geq\varepsilon|h|\},
$$
where $\partial\mathfrak{T}$ denotes the topological boundary of $\mathfrak{T}$. We call the closed arc $S^1\cap\mathfrak{T}$ the set of causal directions.  For $\alpha,\beta\in S^1\cap\mathfrak{T}$, we define $\alpha<\beta$ if $\{\alpha,\beta\}$ is positively oriented. From now on, we denote $(S^1\cap \mathfrak{T},<)$ by $[m^-,m^+]$ and let $(m^-,m^+):=[m^-,m^+]\backslash\{m^{\pm}\}$, which is called timelike directions.

With terminologies defined in Definition \ref{def ap}, for $\alpha\in (m^-,m^+)$, we use $\mathscr{R}_\alpha$ to denote the set of all future inextendible timelike rays on $(\mathbb{R}^2,g)$ with direction $\alpha$.

\begin{Rem}\label{uniform dis}
One can show that, see for instance \cite[Corollary 4.6]{Jin-Cui}, for timelike curves in $\mathscr{R}_\alpha$, the constant $D$ in Definition \ref{def ap} only depends on $g$.
\end{Rem}

The necessary facts on the structure of timelike lines with timelike directions is described in the following theorem. For more details, see \cite{Sc},\cite{Su-2} and \cite{Jin-Cui}.
\begin{The}[\cite{Sc}, \cite{Su-2}] \label{Sc-2}
Let $(\R^2,g)$ be the Abelian cover of a class A Lorentzian 2-torus, then:  \\
\begin{enumerate}
  \item There exist causal lines for every asymptotic direction $\alpha\in[m^-,m^+]$ and every causal line has an asymptotic direction $\alpha\in[m^-,m^+].$

  \item Let $\alpha\in(m^-,m^+)$ be an irrational asymptotic direction. Then any two distinct timelike lines with the same asymptotic direction $\alpha$ are disjoint.

  \item The asymptotic direction is continuous w.r.t. the $C^{0}$-topology on the space of causal lines, i.e. if $\gamma_{k}$ is a series of causal lines with asymptotic directions $\alpha_{k}$ and $\gamma_{k}$ converges to $\gamma$ in the $C^{0}$-topology with asymptotic direction $\alpha$, then $\alpha_{k}\rightarrow\alpha$ w.r.t. the topology defined on $[m^{-},m^{+}]$.
\end{enumerate}
\end{The}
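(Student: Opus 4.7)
The plan is to treat the three assertions separately, each with a different technique. For part (1), existence of causal lines is a limit-curve argument: given $\alpha\in[m^-,m^+]$, pick $q_n\in\overline{\alpha}$ and $p_n\in-\overline{\alpha}$ with $|p_n|,|q_n|\to\infty$; for $n$ large one has $q_n\in J^+(0)$ and $0\in J^+(p_n)$ since $\alpha\in\mathfrak{T}$ and $\mathrm{dist}_{|\cdot|}(J^+(x)-x,\mathfrak{T})$ is uniformly bounded by \cite{Su-1}. Global hyperbolicity of the Abelian cover yields maximal causal segments $\gamma_n$ from $p_n$ to $q_n$; reparametrize by Euclidean arclength and shift the parameter so that $0\in\mathrm{Image}(\gamma_n)$. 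The limit-curve theorem for causal curves in a globally hyperbolic spacetime extracts a subsequential $C^{0}$-limit $\gamma$, which is inextendible (because $|p_n|+|q_n|\to\infty$) and maximal (limits of maximizers are maximizers), hence a causal line. For the converse implication, any inextendible causal line $\gamma$ has every secant $\gamma(t)-\gamma(s)$ contained in a fixed Euclidean neighbourhood of $\mathfrak{T}$, so any angular accumulation of $(\gamma(t)-\gamma(0))/|\gamma(t)-\gamma(0)|$ lies in $[m^-,m^+]$; uniqueness of the accumulation direction follows from a Lorentzian maximality argument: two distinct accumulation directions would allow a long arc of $\gamma$ to be replaced by a straighter causal curve of strictly larger Lorentzian length, contradicting maximality.

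For part (2), I would run the standard Aubry--Mather non-crossing argument. Suppose two distinct timelike lines $\gamma_1,\gamma_2$ with the same irrational direction $\alpha$ share a point $p$. Tangential meeting at $p$ is excluded by uniqueness of geodesics with prescribed initial data. For a transverse meeting, pick $p_1=\gamma_1(s_1)$ far in the past on $\gamma_1$ and $p_2=\gamma_2(t_2)$ far in the future on $\gamma_2$; the broken concatenation $\gamma_1|_{[s_1,0]}\ast\gamma_2|_{[0,t_2]}$ (with $\gamma_i(0)=p$) has a timelike corner at $p$, and smoothing that corner inside a convex normal neighbourhood strictly increases Lorentzian length by the reverse triangle inequality for timelike corners, contradicting the maximality of the subarcs of $\gamma_1$ and $\gamma_2$. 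Irrationality of $\alpha$ enters in the background, as it prevents the two lines from being $\mathbb{Z}^2$-translates forced to coincide as curves, but the crossing obstruction itself is purely geometric.

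For part (3), assume $\gamma_k\to\gamma$ in $C^{0}$ with asymptotic directions $\alpha_k$ and $\alpha$, and let $\alpha_{k_j}\to\beta$ be any convergent subsequence in the compact arc $[m^-,m^+]$. By Remark \ref{uniform dis}, each secant $\gamma_{k_j}(T)-\gamma_{k_j}(0)$ lies within a uniform distance $D$ of the ray $\overline{\alpha_{k_j}}$, with $D$ independent of $j$; fixing $T$ and using local uniform convergence gives $\gamma(T)-\gamma(0)$ within distance $D$ of $\overline{\beta}$. But the same vector is also within distance $D$ of $\overline{\alpha}$, since $\gamma$ has direction $\alpha$; letting $T\to\infty$ and using that two distinct rays through the origin in $\mathbb{R}^{2}$ separate unboundedly forces $\alpha=\beta$. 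Hence every convergent subsequence of $(\alpha_k)$ has limit $\alpha$, proving $\alpha_k\to\alpha$.

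I expect the main obstacle to lie in the uniqueness of the asymptotic direction in part (1). The inclusion of secants in a bounded neighbourhood of $\mathfrak{T}$ only gives an arc of accumulation directions in $[m^-,m^+]$, and collapsing this arc to a single direction genuinely uses maximality in a global way, not merely at a local scale; making the replacement-by-straighter-curve argument rigorous requires a quantitative comparison of Lorentzian length along two causal curves sharing endpoints. The non-crossing argument in part (2) is close behind in difficulty, since the smoothing step needs an explicit quantitative reverse triangle inequality for timelike corners that depends on the angle at the corner and the local Lorentz metric.
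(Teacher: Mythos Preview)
The paper does not give a proof of this theorem; it is quoted from \cite{Sc} and \cite{Su-2} as background, so there is no argument in the paper to compare against. I will therefore comment only on the soundness of your sketch.

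Your argument for part~(2) has a genuine gap. The corner-smoothing you describe produces a timelike curve from $p_1\in\gamma_1$ to $p_2\in\gamma_2$ whose Lorentzian length exceeds $d(p_1,p)+d(p,p_2)$. That is nothing more than the strict reverse triangle inequality; it does not contradict maximality of either $\gamma_1$ or $\gamma_2$, since neither of those lines joins $p_1$ to $p_2$. A \emph{single} transverse crossing of two maximal timelike geodesics is not forbidden by local geometry, and in fact for a rational direction $\alpha$ distinct timelike lines of direction $\alpha$ can cross once (heteroclinics lying in the same gap between neighbouring periodic lines). So your claim that ``the crossing obstruction itself is purely geometric'' and that irrationality ``enters in the background'' is precisely backwards. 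The argument in \cite{Sc}, following Bangert's Aubry--Mather framework, uses the $\mathbb{Z}^2$-action in an essential way: a single crossing, together with irrationality of $\alpha$, forces a $\mathbb{Z}^2$-translate to produce a second crossing, and \emph{two} crossings are what the Morse crossing lemma (the bigon shortcut) actually rules out. Your sketch contains neither the two-crossing reduction nor any use of the deck transformations, and cannot be completed as written.

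Parts~(1) and~(3) are closer to correct. In~(1) the uniqueness of the asymptotic direction is, as you suspect, the substantive point; making the ``replace by a straighter curve'' idea precise requires the stable-norm comparison of \cite{Su-1} rather than a purely local estimate. In~(3) your use of Remark~\ref{uniform dis} is the right idea, but note that as stated that remark covers $\mathscr{R}_\alpha$ for $\alpha\in(m^-,m^+)$, i.e.\ timelike rays with interior direction; the endpoints $m^\pm$ and the passage from rays to lines need a separate word.
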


The following proposition shows that on a class A 2-torus, a causal curve can not distort far from a segment.
\begin{Pro}[{\cite[Corollary 12]{Su-1}}]\label{lemma_Rdist}
Let $(\mathbb{R}^{2},g)$ be the Abelian cover of a class A Lorentzian 2-torus. There exists a constant $C(g)<\infty$ such that
\begin{equation*}
L^{|\cdot|}(\gamma)\leq B(g)\cdot|x-y|
\end{equation*}
for all $x,y\in\mathbb{R}^{2}$ and all causal curves $\gamma:I\rightarrow\mathbb{R}^{2}$ connecting $x$ and $y$.
\end{Pro}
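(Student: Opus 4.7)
The plan is to reduce the statement to a uniform ``cone condition'': there should exist a Euclidean unit vector $w\in\R^2$ and a constant $\varepsilon>0$ such that $\langle v,w\rangle\geq\varepsilon\,|v|$ for every future-directed causal vector $v\in T_p\R^2\cong\R^2$ and every $p\in\R^2$, where $\langle\cdot,\cdot\rangle$ denotes the Euclidean inner product. Granted this, the bound is immediate: parametrize $\gamma$ by its Euclidean arclength, so that $|\dot\gamma|\equiv 1$ almost everywhere and $L^{|\cdot|}(\gamma)$ equals the length of the parameter interval, and compute
\begin{equation*}
|y-x|\;\geq\;\langle y-x,w\rangle\;=\;\int\langle\dot\gamma(t),w\rangle\,dt\;\geq\;\varepsilon\,L^{|\cdot|}(\gamma),
\end{equation*}
so one may take $B(g):=1/\varepsilon$.

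To produce such a pair $(w,\varepsilon)$, I would use the $\Z^2$-invariance of the causal structure together with compactness of $\T^2$ to reduce the question to the compact family of future-directed causal cones $\{C_p^+\}_{p\in\T^2}$ in $\R^2$. Each $C_p^+$ is a proper open cone bounded by two continuously varying future-directed lightlike directions $\ell_p^\pm\in S^1$. Thus it suffices to show that $\bigcup_{p\in\T^2}C_p^+$ is contained in an open half-plane, or equivalently that the compact set $\{\ell_p^\pm:p\in\T^2\}\subset S^1$ lies inside an open arc of length strictly less than $\pi$.

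For this last step the class A assumption enters essentially. The sections $\ell^\pm$ generate, on the cover, two continuous families of future-directed inextendible lightlike geodesics, and Theorem \ref{Sc-2}(1) guarantees that every such curve has an asymptotic direction, necessarily equal to one of $m^\pm$; by hypothesis $\{m^-,m^+\}$ is linearly independent. If the continuous image of $\ell^+$ (say) were to fill an arc of length $\geq\pi$, one could combine this with global hyperbolicity of $(\R^2,g)$ to produce a future lightlike geodesic in the $\ell^+$-family whose tangent direction eventually passes through the antipode of its initial value, making a single asymptotic direction impossible and contradicting Theorem \ref{Sc-2}(1). Thus both images lie in short arcs, and the cone condition follows.

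The main obstacle is exactly the previous paragraph: excluding excessive angular variation of the lightlike direction fields on $\T^2$ by a genuine interplay of the class A assumption with global hyperbolicity of the cover. This is the substance of Corollary 12 of \cite{Su-1} that the proposition cites directly, and it is decidedly non-local: neither continuity of the metric on $\T^2$ nor Proposition 8 of \cite{Su-1} alone (which controls only macroscopic displacements and gives no information about small Euclidean balls) is sufficient. Once the cone condition is granted, however, the rest of the argument is the one-line Cauchy--Schwarz estimate displayed above.
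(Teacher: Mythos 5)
First, a remark on the comparison you asked for: the paper does not prove Proposition \ref{lemma_Rdist} at all --- it is imported verbatim from \cite[Corollary 12]{Su-1} --- so your attempt must be judged on its own merits. Your final reduction is correct and clean: if there were a single Euclidean unit covector $w$ and an $\varepsilon>0$ with $\langle v,w\rangle\geq\varepsilon|v|$ for every future-directed causal vector $v$ at every point, then parametrizing by Euclidean arclength and integrating gives $L^{|\cdot|}(\gamma)\leq\varepsilon^{-1}|x-y|$. The genuine gap is that this uniform \emph{constant-covector} cone condition is exactly the hard content, and the argument you sketch for it does not hold up. Two concrete problems. (i) The integral curve of $\ell^{+}$ through a point only samples the values of $\ell^{+}$ along its own leaf, and leaves of a nonsingular foliation of $\T^{2}$ need not be dense (they may all be closed); so even if the image of $\ell^{+}\colon\T^{2}\to S^{1}$ filled an arc of length $\geq\pi$, no single lightlike curve need ever realize two nearly antipodal tangent directions. (ii) More seriously, a curve whose tangent momentarily attains the antipode of its asymptotic direction is \emph{not} in contradiction with Definition \ref{def ap} or Theorem \ref{Sc-2}(1): the curve $t\mapsto(t+2\sin t,\,0)$ has asymptotic direction $(1,0)$ with $D=8$ in the sense of Definition \ref{def ap}, yet its tangent is repeatedly a positive multiple of $(-1,0)$. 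Having an asymptotic direction constrains only macroscopic displacements $\gamma(t)-\gamma(s)$, not instantaneous tangents; ruling out bounded backtracking of causal curves is precisely what Proposition \ref{lemma_Rdist} is for, so deriving the cone condition from the mere existence of asymptotic directions is close to circular.

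Beyond the gap in its justification, the constant-covector condition is also stronger than what the standard argument uses, and it is not clear it holds for every class A torus: Proposition \ref{future} and the uniform bound $\mathrm{dist}_{|\cdot|}(J^{+}(x)-x,\mathfrak{T})\leq R$ from \cite[Proposition 8]{Su-1} are perfectly compatible with light cones that tilt past a half-plane at isolated points, since a causal curve may travel a bounded Euclidean distance against $\mathfrak{T}$ before being forced back. The robust repair of your computation is to replace the constant $w$ by the differential of an \emph{equivariant temporal function} $\tau=\lambda+f$ on $(\R^{2},g)$, with $\lambda$ linear and $f$ being $\Z^{2}$-periodic, whose existence follows from global hyperbolicity of the cover together with compactness of $\T^{2}$ (this is part of the machinery of \cite{Su-1}). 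Then $d\tau(v)\geq\varepsilon|v|$ on future causal vectors by compactness, and the same one-line integration yields
\begin{equation*}
\varepsilon\, L^{|\cdot|}(\gamma)\leq\tau(y)-\tau(x)=\lambda(y-x)+f(y)-f(x)\leq\bigl(|\lambda|+\|df\|_{\infty}\bigr)\,|x-y|,
\end{equation*}
which is the desired bound. So the skeleton of your proof is right, but the object carrying the estimate must be a closed one-form on the torus rather than a constant covector, and its existence --- the actual substance of \cite[Corollary 12]{Su-1} --- is not established by the antipode argument you propose.
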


Let Light$(M,g)$ denote the set of all future directed null vectors on $(\mathbb{R}^{2},g)$. For $\delta>0$, we define
$$
\text{Time}(\mathbb{R}^{2},g)^{\delta}:=\{v\text{ future directed}\,\,|\,\,\text{dist}_{|\cdot|}(v,\text{Light}(\mathbb{R}^{2},g))\geq\delta|v|\}
$$
and Time$(\mathbb{R}^{2},g)^{1,\delta}$ as the set of unit timelike vectors in Time$(\mathbb{R}^{2},g)^{\delta}$. The following theorem shows $\text{Time}(\mathbb{R}^2,g)^{1,\delta}$ is compact.

\begin{Pro}[{\cite[Theorem 3.3]{Jin-Cui}}]\label{compact_bundle}
Let $(\mathbb{R}^2,g)$ be the Abelian cover of a class A Lorentzian 2-torus, then there exists a constant $K(g)>0$ such that
\begin{equation*}
  |v|\leq \frac{K(g)}{\delta}
\end{equation*}
for any $v\in\text{Time}(\mathbb{T}^2,g)^{1,\delta}$.
\end{Pro}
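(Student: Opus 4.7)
The plan is to work pointwise in a Euclidean-unit null frame and to exploit $\mathbb{Z}^2$-periodicity of $g$ to extract the required uniform constant from compactness of $\mathbb{T}^2$.

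At each $p\in\mathbb{R}^2$, let $e_1(p),e_2(p)$ be the two future-directed Euclidean-unit null vectors of $g_p$. These exist, vary continuously in $p$, and are linearly independent, since in dimension two the null cone of a Lorentzian form is the union of two distinct lines. A standard fact of Lorentzian linear algebra states that distinct future-directed null vectors have strictly negative $g$-inner product, so
\[
c(p):=-g_p\bigl(e_1(p),e_2(p)\bigr)
\]
is a continuous positive function on $\mathbb{R}^2$. Since $g$ is $\mathbb{Z}^2$-periodic, $c$ descends to the compact torus $\mathbb{T}^2$, yielding a uniform bound $c(p)\geq c_0(g)>0$.

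Now take $v\in\mathrm{Time}(\mathbb{R}^2,g)^{1,\delta}$ based at $p$ and write $v=ae_1(p)+be_2(p)$ with $a,b>0$; this decomposition exists because a future-directed timelike vector lies strictly inside the cone positively spanned by the two future null directions. Expanding in the null frame gives $g(v,v)=-2ab\,c(p)$, and the normalization $g(v,v)=-1$ yields
\[
ab=\frac{1}{2c(p)}\leq\frac{1}{2c_0}.
\]
For the Euclidean distance to the light cone, the vectors $ae_1(p)$ and $be_2(p)$ both lie in $\mathrm{Light}(\mathbb{R}^2,g)$, so
\[
\mathrm{dist}_{|\cdot|}\bigl(v,\mathrm{Light}(\mathbb{R}^2,g)\bigr)\leq\min\bigl(|v-ae_1(p)|,\,|v-be_2(p)|\bigr)=\min(a,b).
\]

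Combining this inequality with the defining hypothesis $\mathrm{dist}_{|\cdot|}(v,\mathrm{Light})\geq\delta|v|$ gives $\min(a,b)\geq\delta|v|$, hence $\delta^2|v|^2\leq\min(a,b)^2\leq ab\leq 1/(2c_0)$, and the claim follows with $K(g):=1/\sqrt{2c_0}$. The only step requiring any attention is the uniform lower bound $c(p)\geq c_0$, but this is a direct continuity-plus-compactness argument on $\mathbb{T}^2$; the remainder is routine linear algebra in the null frame, so I expect no genuine obstacle.
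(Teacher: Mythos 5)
Your argument is correct. Note that this paper does not prove the proposition at all --- it is quoted verbatim from \cite[Theorem 3.3]{Jin-Cui} --- so there is no in-paper proof to compare against; your null-frame computation ($g(v,v)=-2ab\,c(p)$, $\mathrm{dist}_{|\cdot|}(v,\mathrm{Light})\leq\min(a,b)$, and the compactness bound $c\geq c_0>0$ on $\mathbb{T}^2$) is exactly the kind of elementary two-dimensional argument the cited reference relies on, and it is complete as written.
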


The following proposition is used in the construction of solutions to Equation \eqref{te} whose gradient lines has a fixed direction.
\begin{Pro}[{\cite[Proposition 27]{Su-1}}]\label{future}
For every $C>0$ there exists a constant $0<K(C)<\infty$ such that
\begin{equation*}
  B^{|\cdot|}_C(y)\subseteq I^+(x)
\end{equation*}
for all $x,y\in\mathbb{R}^{2}$ with $y-x\in\mathfrak{T}$ and dist$_{|\cdot|}(y-x,\partial \mathfrak{T})\geq K(C)$, where $B^{|\cdot|}_C(y)$ is the closed ball of radius $C$ centered at $y$.
\end{Pro}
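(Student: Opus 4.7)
The plan is to reduce the ball statement to a ``point'' version and then argue by contradiction using the $\Z^2$-symmetry together with the existence of timelike rays for every interior direction.

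\textbf{Reduction.} It suffices to produce a single constant $K_0>0$ such that $y-x\in\mathfrak{T}$ with $\mathrm{dist}_{|\cdot|}(y-x,\partial\mathfrak{T})\geq K_0$ implies $y\in I^+(x)$. Granted this, I will take $K(C):=K_0+C$; for any $z\in B_C^{|\cdot|}(y)$ the decomposition $z-x=(y-x)+(z-y)$ with $|z-y|\le C$ yields (using convexity of $\mathfrak{T}$) $z-x\in\mathfrak{T}$ and $\mathrm{dist}_{|\cdot|}(z-x,\partial\mathfrak{T})\geq K_0$, so the point version applied to the pair $(x,z)$ gives $z\in I^+(x)$.

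\textbf{Point version by contradiction.} Assume no such $K_0$ works, so there are sequences $(x_n,w_n)$ with $w_n-x_n\in\mathfrak{T}$, $\mathrm{dist}_{|\cdot|}(w_n-x_n,\partial\mathfrak{T})\to\infty$, yet $w_n\notin I^+(x_n)$. Using $\Z^2$-invariance of $g$, translate so that $x_n\in[0,1]^2$ and extract $x_n\to x_\infty$. The hypothesis forces $|w_n-x_n|\to\infty$, so the unit vectors $\alpha_n:=(w_n-x_n)/|w_n-x_n|$ admit a subsequential limit $\alpha\in\mathfrak{T}\cap S^1$; for $n$ large, $\alpha_n\in(m^-,m^+)$. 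By Theorem~\ref{Sc-2}(1) combined with the $\Z^2$-action, there is a future directed timelike ray $\gamma_n$ starting at $x_n$ with asymptotic direction $\alpha_n$; by Remark~\ref{uniform dis}, $\gamma_n$ stays within uniform Euclidean distance $D=D(g)$ of $x_n+\overline{\alpha_n}$. Consequently $\gamma_n$ passes within Euclidean distance $D+1$ of $w_n$, yielding a point $p_n\in I^+(x_n)$ with $|p_n-w_n|\le D+1$.

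\textbf{Closing the contradiction (main obstacle).} The delicate step is bridging the bounded Euclidean gap from $p_n$ to $w_n$ to obtain $w_n\in I^+(p_n)$, whence $w_n\in I^+(x_n)$ by transitivity of the chronological relation. The vector $w_n-p_n$ is merely Euclidean-bounded and need not lie deep in $\mathfrak{T}$, so its $g$-causal character is not automatic from the asymptotic data. My strategy here is to refine the choice of $p_n$ on $\gamma_n$, taking a parameter slightly earlier than the nearest approach to $w_n$, and to exploit $\Z^2$-periodicity of $g$ together with the uniform control on unit timelike vectors from Proposition~\ref{compact_bundle}. These ingredients should ensure that the chronological diamond $I^+(p_n)\cap I^-(\gamma_n(t))$, for suitably large $t$ along $\gamma_n$, contains a Euclidean neighborhood of uniform size about the nearest-approach region. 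This will force $w_n\in I^+(p_n)$ for large $n$ and produce the required contradiction.
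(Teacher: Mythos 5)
This proposition is not proved in the paper at all: it is quoted verbatim from Suhr, \emph{Class A spacetimes} \cite[Proposition~27]{Su-1}, so there is no in-paper argument to compare against. Judged on its own merits, your attempt has a genuine gap. The reduction from the ball version to the point version is fine (the distance from an interior point of the closed cone to $\partial\mathfrak{T}$ equals its distance to the complement, so $B^{|\cdot|}_{C}(y)-x\subseteq\mathfrak{T}$ with the stated margin). The problems begin afterwards.

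First, the existence of a future timelike ray $\gamma_n$ \emph{starting at the prescribed point} $x_n$ with direction $\alpha_n$ is not what Theorem~\ref{Sc-2}(1) gives you: that theorem produces causal lines with every direction somewhere in $\R^2$, not through a given point, and the $\Z^2$-action only moves a line to its integer translates. Worse, the standard constructions of rays from an arbitrary point in a given direction (as in \cite{Sc}, \cite{Jin-Cui}) are obtained as limits of maximizers from $x_n$ to far-away points deep inside $x_n+\mathfrak{T}$ --- and knowing that such points lie in $I^+(x_n)$ is precisely the content of the proposition you are trying to prove. So this route is in danger of being circular unless you supply an independent construction of the rays.

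Second, and decisively, the step you yourself flag as the ``main obstacle'' is the entire content of the proposition, and it is not carried out. Having a point $p_n\in\gamma_n$ with $|p_n-w_n|\le D+1$ does not yield $w_n\in I^+(x_n)$: converting a \emph{bounded Euclidean} displacement into a \emph{chronological} relation, uniformly over the plane, is exactly the uniformity that Proposition~\ref{future} asserts. A compactness argument using $\Z^2$-invariance and openness of the relation $I^+$ does give some uniform radius $\rho_0>0$ such that $B^{|\cdot|}_{\rho_0}(m)\subseteq I^+(q)$ whenever $d(q,m)\ge 1$ and $|m-q|$ is bounded, but nothing forces $\rho_0\ge D+1$, so the diamond $I^+(q)\cap I^-(\gamma_n(t))$ need not swallow $w_n$; one would need an additional iteration or covering argument (which is essentially what Suhr does) to bridge the remaining gap. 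As written, ``these ingredients should ensure'' is a statement of hope, not a proof, and the contradiction is never actually closed.
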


Now we list two important results showing the Lipschitz regularity of Lorentzian distance function in the case of class A Lorentzian 2-tori. They form the basis of our construction of global solutions to the timelike eikonal equation \eqref{te}.
\begin{The} [\cite{Su-4}, Proposition 4.1] \label{Su4:compact-result}
Let $(\mathbb{R}^{2},g)$ be the Abelian cover of a class A Lorentzian 2-torus. Then for all $\varepsilon>0$, there exist $\delta(\varepsilon)>0$ and $K(\varepsilon)<\infty$ such that
\begin{equation*}
\dot{\gamma}(t)\in\text{Time}(\mathbb{R}^{2},g)^{1,\delta(\varepsilon)}
\end{equation*}
for all unit speed maximizers $\gamma:[a,b]\rightarrow\mathbb{R}^{2}$ with $\gamma(b)-\gamma(a)\in\mathfrak{T}^{\varepsilon}\backslash B^{|\cdot|}_{K(\varepsilon)}(0)$ and all $t\in [a,b]$.
\end{The}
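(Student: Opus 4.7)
My plan is to argue by contradiction via a rescaling and compactness argument: the rescaled limit of a would-be counterexample sequence of maximizers should be a null geodesic, whose asymptotic direction is forced onto $\partial\mathfrak{T}$ by Theorem~\ref{Sc-2}, contradicting the confinement $h_n/|h_n|\to u_\infty\in\mathfrak{T}^\varepsilon$.

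Suppose no such $\delta(\varepsilon)$ and $K(\varepsilon)$ exist. Then there are unit-speed $g$-maximizers $\gamma_n\colon[a_n,b_n]\to\R^2$ with $h_n:=\gamma_n(b_n)-\gamma_n(a_n)\in\mathfrak{T}^\varepsilon$, $|h_n|\to\infty$, and times $t_n\in[a_n,b_n]$ with $\dot\gamma_n(t_n)\notin\text{Time}(\R^2,g)^{1,1/n}$. Proposition~\ref{compact_bundle} then forces $M_n:=|\dot\gamma_n(t_n)|\to\infty$. Since the deck transformations $T_{\mathbf{k}}$ are $g$-isometries acting cocompactly, I translate each $\gamma_n$ by a suitable $T_{\mathbf{k}_n}$ so that $\gamma_n(t_n)$ lies in a fixed fundamental domain, and extract subsequences so that $\gamma_n(t_n)\to p_\infty$, $\dot\gamma_n(t_n)/M_n\to w$ (a Euclidean-unit future null vector, WLOG $w=m^+$), and $h_n/|h_n|\to u_\infty\in\mathfrak{T}^\varepsilon$. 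I then rescale, putting $\sigma_n(r):=\gamma_n(t_n+r/M_n)$; each $\sigma_n$ is a $g$-geodesic in the affinely rescaled parameter $r$ with $\sigma_n(0)\to p_\infty$ and $\dot\sigma_n(0)=\dot\gamma_n(t_n)/M_n\to w$. Smooth dependence of the geodesic ODE on initial data yields $C^1$-convergence $\sigma_n\to\sigma_\infty$ on every compact interval, where $\sigma_\infty$ is the $g$-geodesic through $p_\infty$ with initial tangent $w$; since $w$ is null, $\sigma_\infty$ is a null geodesic of $(\R^2,g)$, and by Theorem~\ref{Sc-2}(1) its asymptotic direction equals $m^+\in\partial\mathfrak{T}$.

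The main obstacle will be converting this local limit into a global contradiction with $u_\infty\in\mathfrak{T}^\varepsilon$, since $C^1$-convergence on compacts only controls $\gamma_n$ in a Lorentzian window of length $O(1/M_n)$ about $t_n$, whereas $h_n$ is determined by the full interval $[a_n,b_n]$. To close the argument I would combine Proposition~\ref{lemma_Rdist} (giving $L^{|\cdot|}(\gamma_n)\le B(g)|h_n|$) with Proposition~\ref{future} (which places chronologically accessible Euclidean balls deep inside $\mathfrak{T}^\varepsilon$), and run an iterative pigeon-hole argument that propagates the near-null behaviour of $\dot\gamma_n$ outside the shrinking scaling window. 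Concretely, if $\gamma_n$ strayed significantly from the null line underlying $\sigma_\infty$ before reaching its endpoints, the maximality of $\gamma_n$ would be violated by a timelike ``short-cut'' through a point deep in $\mathfrak{T}^\varepsilon$, where Proposition~\ref{future} guarantees ample chronological room. The resulting forced alignment $h_n/|h_n|\to m^+$ contradicts $u_\infty\in\mathfrak{T}^\varepsilon$. Making this quantitative geodesic-stability step precise—a Lorentzian analogue of the Riemannian technique of Bangert—is where the bulk of the technical work would reside.
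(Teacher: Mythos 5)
First, note that the paper does not prove this statement at all: Theorem~\ref{Su4:compact-result} is imported verbatim from \cite{Su-4} (Proposition 4.1 there), so there is no internal proof to compare against, and your attempt has to be judged on its own merits. On those merits there is a genuine gap, and you have in fact located it yourself: the blow-up $\sigma_n(r)=\gamma_n(t_n+r/M_n)$ does converge to a null geodesic $\sigma_\infty$, but this only tells you that $\gamma_n$ osculates $\sigma_\infty$ on a window that is macroscopic in Euclidean length yet of $g$-length $O(S/M_n)\to 0$. This yields \emph{no} contradiction with $h_n/|h_n|\to u_\infty\in\mathfrak{T}^\varepsilon$: since $\mathfrak{T}^\varepsilon$ is a cone with nonempty interior, a causal curve can follow a near-null direction over a sub-segment (even one of length proportional to $|h_n|$) and still have its total chord deep inside $\mathfrak{T}^\varepsilon$ --- the sum of a boundary vector and an interior vector of the cone is again interior. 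So the ``forced alignment $h_n/|h_n|\to m^+$'' you invoke at the end simply does not follow from the chord condition; the contradiction can only come from \emph{maximality}, used quantitatively. Concretely one needs something like: if $\gamma_n$ runs nearly along $m^+$ for Euclidean length $R$ (contributing $g$-length $\approx 0$) and then in an interior direction for the remainder, then $d(\gamma_n(a_n),\gamma_n(b_n))$ strictly exceeds the sum of the lengths of the two pieces, by an amount controlled from below via the strict superadditivity of the time separation across causally independent directions (or, as in Suhr's argument, via a limit-curve construction in the Euclidean-arclength parametrization, Lemma~\ref{lemma_limit_curve}, producing an inextendible causal line whose asymptotic direction lies in $\mathfrak{T}^\varepsilon\cap S^1$ and which is therefore uniformly timelike, so that the $C^1$-convergence of geodesics forces $\dot\gamma_n(t_n)$ into a fixed $\text{Time}(\R^2,g)^{1,\delta}$). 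That step is the entire content of the theorem, and your proposal defers it to ``an iterative pigeon-hole argument'' and an unspecified ``timelike short-cut'' without identifying the competing curve or the source of the length gain.

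Two smaller points. The inference that $M_n:=|\dot\gamma_n(t_n)|\to\infty$ does not follow from Proposition~\ref{compact_bundle} as you state --- that proposition bounds $|v|$ on $\text{Time}(\R^2,g)^{1,\delta}$, i.e.\ it is the converse of what you need; the correct (and true) justification is that over the compact torus the set of $g$-unit timelike vectors of bounded Euclidean norm is compact and bounded away from the light cone relative to $|\cdot|$. Also, before blowing up at $t_n$ you must rule out that the window $[t_n-S/M_n,\,t_n+S/M_n]$ leaves $[a_n,b_n]$; this is harmless (one of the two half-windows survives because $b_n-a_n=d(\gamma_n(a_n),\gamma_n(b_n))\to\infty$ for chords in $\mathfrak{T}^\varepsilon$ of diverging norm), but it should be said.
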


\begin{The}[\cite{Su-4}, Theorem 4.3] \label{Su4:d-Lip}
Let $(\mathbb{R}^{2},g)$ be the Abelian cover of a class A Lorentzian 2-torus. Then for any $\varepsilon>0$, there exist constants $K(\varepsilon),L(\varepsilon)<\infty$ such that
\begin{equation*}
(x,y)\mapsto d(x,y)
\end{equation*}
is $L(\varepsilon)$-Lipschitz (with respect to the Euclidean metric) on $\{(x,y)\in\mathbb{R}^{2}\times\mathbb{R}^{2}\,\,|\,\,y-x\in\mathfrak{T}^{\varepsilon}\backslash B^{|\cdot|}_{K(\varepsilon)}(0)\}$.
\end{The}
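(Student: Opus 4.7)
The plan is to establish a local Lipschitz estimate---whenever $(x,y)$ and $(x',y')$ both lie in the good set and are within Euclidean distance $r_{0}(\varepsilon)$---and then promote it to a global one by chaining through intermediate pairs. By symmetry it suffices to produce, given a $g$-unit-speed maximizer $\gamma:[0,T]\to\mathbb{R}^{2}$ from $x$ to $y$ with $T=d(x,y)$, a causal competitor from $x'$ to $y'$ whose $g$-length is at least $T-L(\varepsilon)(|x-x'|+|y-y'|)$; writing $r:=\max(|x-x'|,|y-y'|)$, this amounts to an $O(r)$ loss.

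First I would collect uniform bounds on such maximizers. Enlarging $K(\varepsilon)$ so that perturbations of size $\leq r_{0}$ keep the displacement inside $\mathfrak{T}^{\varepsilon/2}\setminus B^{|\cdot|}_{K(\varepsilon/2)}(0)$, Theorem \ref{Su4:compact-result} applied with $\varepsilon/2$ furnishes a uniform $\delta=\delta(\varepsilon/2)>0$ with $\dot\gamma(t)\in\text{Time}(\mathbb{R}^{2},g)^{1,\delta}$ for every $t$, and Proposition \ref{compact_bundle} bounds $|\dot\gamma(t)|\leq M:=K(g)/\delta$. The $\mathbb{Z}^{2}$-periodicity of $g$ yields a uniform pointwise bilinear bound $|g_{p}(u,v)|\leq C_{g}|u||v|$, and the hypothesis $|y-x|\geq K(\varepsilon)$ combined with $|y-x|\leq L^{|\cdot|}(\gamma)\leq MT$ gives the key lower bound $T\geq K(\varepsilon)/M$.

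The heart of the argument is the affine deformation
\[
\sigma(t)=\gamma(t)+\bigl(1-t/T\bigr)(x'-x)+(t/T)(y'-y),\qquad t\in[0,T],
\]
with $\sigma(0)=x'$ and $\sigma(T)=y'$. Setting $w=(y'-y)-(x'-x)$, so that $|w|\leq 2r$ and $\dot\sigma=\dot\gamma+w/T$, one computes
\[
g(\dot\sigma,\dot\sigma)=-1+\frac{2}{T}g(\dot\gamma,w)+\frac{1}{T^{2}}g(w,w),
\]
with the two correction terms bounded in absolute value by $4C_{g}Mr/T$ and $4C_{g}r^{2}/T^{2}$ respectively. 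Using $T\geq K(\varepsilon)/M$, choosing $r_{0}(\varepsilon)$ small enough makes both terms less than $1/4$, so $\sigma$ is timelike and in particular causal. The pointwise inequality $\sqrt{1-\epsilon}\geq 1-|\epsilon|$ then yields
\[
L^{g}(\sigma)\geq T-\int_{0}^{T}\Bigl(\frac{4C_{g}Mr}{T}+\frac{4C_{g}r^{2}}{T^{2}}\Bigr)\,dt = T-4C_{g}Mr-\frac{4C_{g}r^{2}}{T}.
\]
Both subtracted terms are $O(r)$ uniformly in the good set, so $d(x',y')\geq L^{g}(\sigma)\geq d(x,y)-L(\varepsilon)r$; interchanging the roles of $(x,y)$ and $(x',y')$ closes the argument.

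The main obstacle I anticipate is the uniformity in $T$ of the length estimate: the pointwise correction to $\sqrt{-g(\dot\sigma,\dot\sigma)}$ is of order $1/T$ but is integrated over an interval of length $T$, so one must be careful that the resulting contribution to $L^{g}(\sigma)$ stays $O(r)$ rather than $O(Tr)$. This is precisely why the theorem asks $(x,y)$ to lie outside $B^{|\cdot|}_{K(\varepsilon)}(0)$: the resulting lower bound $T\geq K(\varepsilon)/M$ both certifies causality of $\sigma$ under small perturbations and converts the quadratic error $O(r^{2}/T)$ into $O(r)$, thereby making the Lipschitz constant $L(\varepsilon)$ independent of $d(x,y)$.
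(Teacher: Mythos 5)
This theorem is not proved in the paper at all: it is quoted verbatim from Suhr (\cite{Su-4}, Theorem 4.3), and the authors explicitly remark that it ``was obtained in \cite{Su-4} in a more general setting.'' So there is no in-paper argument to compare yours against; I can only assess your proposal on its own terms.

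Your write-up has a genuine gap, and it is exactly the one you warn about in your closing paragraph. In the identity
\[
g(\dot\sigma,\dot\sigma)=-1+\tfrac{2}{T}\,g(\dot\gamma,w)+\tfrac{1}{T^{2}}\,g(w,w)
\]
you silently use $g(\dot\gamma,\dot\gamma)=-1$, but that normalization holds for the metric evaluated at $\gamma(t)$, whereas $\dot\sigma(t)$ must be measured with $g_{\sigma(t)}$. Since $g$ is a general (merely $\mathbb{Z}^{2}$-periodic) Lorentzian metric, $g_{\sigma(t)}(\dot\gamma,\dot\gamma)-g_{\gamma(t)}(\dot\gamma,\dot\gamma)$ is only $O\!\left(|\sigma(t)-\gamma(t)|\,|\dot\gamma|^{2}\right)=O(r)$ \emph{pointwise}, not $O(r/T)$. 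Integrated over $[0,T]$ this contributes an error $O(rT)$ to $L^{g}(\sigma)$, which destroys the uniformity of $L(\varepsilon)$ precisely in the large-separation regime $T\to\infty$ that the theorem is about; the lower bound $T\ge K(\varepsilon)/M$ does nothing against a loss that \emph{grows} with $T$. (Your computation is correct only if $g$ has constant coefficients.) The natural repair is to abandon the global affine interpolation and deform $\gamma$ only on bounded neighborhoods of its endpoints --- e.g.\ $\sigma(t)=\gamma(t)+(1-t)(x'-x)$ on $[0,1]$, $\sigma=\gamma$ on $[1,T-1]$, and symmetrically at the far end. Then every error term, including the base-point discrepancy, is $O(r)$ on an interval of length $O(1)$, the middle portion contributes its full length $T-2$, causality of $\dot\sigma$ for $r\le r_{0}(\varepsilon)$ follows as in your argument from Theorem \ref{Su4:compact-result} and Proposition \ref{compact_bundle}, and the total loss is $O(r)$ as required. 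With that modification (and a word on why the chaining step can keep intermediate pairs inside $\{y-x\in\mathfrak{T}^{\varepsilon/2}\setminus B^{|\cdot|}_{K}(0)\}$, which is a cone minus a ball and hence not convex), your strategy does yield the stated Lipschitz bound.
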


\begin{Rem}
The above two proposition were obtained in \cite{Su-4} in more general setting, but applies directly to our case, see \cite[Proposition 2.5]{Su-4}.
\end{Rem}

\vspace{0.4cm}
\subsection{Lorentzian distance functions based at a pole}
~\\
First, we observe that timelike poles on $(\mathbb{R}^{2},g)$ are exactly lifts of timelike poles on $(\mathbb{T}^{2},g)$. Let $p\in\mathbb{R}^{2}$ be a timelike pole on $(\mathbb{R}^{2},g)$. We define $d_{p}:I^{-}(p)\cup I^{+}(p)\rightarrow\mathbb{R}$ by
\begin{equation*}
d_{p}(x)=\left\{
           \begin{array}{ll}
             d(x,p), & \hbox{$x\in I^{-}(p)$;} \\
             d(p,x), & \hbox{$x\in I^{+}(p)$.}
           \end{array}
         \right.
\end{equation*}

The definition of class A Lorentzian 2-tori implies that $(\mathbb{R}^{2},g)$ is globally hyperbolic. Then from our former paper \cite{Peng-Jin-Cui}, $d_{p}$ is $C^{1}$ on its domains. We formulate this fact into
\begin{Pro}\label{smooth}
Let $(\mathbb{R}^{2},g)$ be a globally hyperbolic plane and $p\in\mathbb{R}^{2}$ a timelike pole. Then the function $d_{p}$ is a $C^{1}$ solution to the timelike eikonal equation \eqref{te} on $I^{-}(p)\cup I^{+}(p)$.
\end{Pro}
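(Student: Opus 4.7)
By time-reversal symmetry of the hypotheses (the pole property and global hyperbolicity are both preserved under reversal of the time orientation), it suffices to prove the claim on $I^+(p)$. My plan is to reduce both the regularity and the eikonal identity to properties of the exponential map $\exp_p$, which the timelike pole hypothesis renders nonsingular on the future timelike cone of $T_p\mathbb{R}^2$.

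For the $C^1$ regularity I would invoke the authors' previous paper \cite{Peng-Jin-Cui}. The key structural input there is the following: since $p$ is a timelike pole, no timelike geodesic from $p$ contains a point conjugate to $p$, and by the Jacobi equation this means $d\exp_p$ is everywhere nonsingular on the future timelike cone of $T_p\mathbb{R}^2$. Combined with global hyperbolicity---which provides a maximizing timelike geodesic from $p$ to every $x \in I^+(p)$---and a two-dimensional topological argument ruling out two distinct maximizers from $p$ meeting at the same point, this yields that $\exp_p$ restricts to a diffeomorphism from the future timelike cone in $T_p\mathbb{R}^2$ onto $I^+(p)$. Hence
\begin{equation*}
d_p(x) = \sqrt{-g_p\bigl(\exp_p^{-1}(x),\exp_p^{-1}(x)\bigr)}
\end{equation*}
is smooth, and in particular $C^1$, on $I^+(p)$.

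To verify the eikonal equation, fix $x \in I^+(p)$ and let $\gamma:[0,d_p(x)] \to \mathbb{R}^2$ be the unique unit-speed maximizing timelike geodesic from $p$ to $x$. Maximality of $\gamma$ forces $d_p(\gamma(t)) = t$ for every $t \in (0, d_p(x)]$: the inequality $d_p(\gamma(t)) \geq t$ uses the segment $\gamma|_{[0,t]}$, while a strict inequality at some interior $t$ would produce, by concatenation with $\gamma|_{[t,d_p(x)]}$, a timelike curve from $p$ to $x$ longer than $d_p(x)$. Differentiating this identity at $t = d_p(x)$ gives $g(\nabla d_p(x), \dot\gamma(d_p(x))) = 1$. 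On the other hand, the Gauss lemma for $\exp_p$---valid precisely because $\exp_p$ is a diffeomorphism on the timelike cone---implies that the level set $\{d_p = d_p(x)\}$ is the $g$-orthogonal complement of $\dot\gamma(d_p(x))$ at $x$, so $\nabla d_p(x)$ must be proportional to $\dot\gamma(d_p(x))$. Combining this proportionality with the identity above and $g(\dot\gamma,\dot\gamma) = -1$ pins down the proportionality constant and forces $g(\nabla d_p, \nabla d_p) = -1$ at $x$.

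The main obstacle---already overcome in \cite{Peng-Jin-Cui}---is establishing the $C^1$ regularity of $d_p$, since that step packages the pole hypothesis, global hyperbolicity, and the planar topology into a global diffeomorphism statement for $\exp_p$. Once this is granted, the eikonal identity reduces to the standard first variation along a radial maximizer, and the case of $I^-(p)$ follows by time reversal.
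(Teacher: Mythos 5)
Your proposal is correct and follows essentially the same route as the paper: reduce to $I^{+}(p)$ by time duality, obtain smoothness of $d_{p}=|\exp_{p}^{-1}(\cdot)|_{g}$ from the diffeomorphism property of $\exp_{p}$ established in \cite[Corollary 5.3]{Peng-Jin-Cui}, and verify the eikonal identity from $d_{p}(\gamma_{x}(t))=t$ along the unique radial maximizer. The only difference is that you make explicit, via the Gauss lemma, the proportionality of $\nabla d_{p}$ to $\dot{\gamma}_{x}$, which the paper leaves implicit in the assertion $|\nabla d_{p}(x)|_{g}=|\dot{\gamma}_{x}(d_{p}(x))|_{g}=1$.
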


\begin{proof}
By symmetry, we shall only prove the conclusion on $I^{+}(p)$. From \cite[Corollary 5.3]{Peng-Jin-Cui}, $exp_{p}$ is a diffeomorphism from some open set $0\notin\mathcal{F}^{+}(p)\subset T_{p}\mathbb{R}^{2}$ onto $I^+(p)$. Hence $d_{p}(x)=|exp_{p}^{-1}(x)|_{g}$ is $C^{1}$ on $I^{+}(p)$.

Since $(\mathbb{R}^{2},g)$ is globally hyperbolic, through any $x\in I^+(p)$, there is a unique future-directed maximal timelike geodesic segment $\gamma_{x}:[0,d_{p}(x)]\rightarrow\mathbb{R}^{2}$ starting from $p$. It follows that $d_{p}(\gamma_{x}(t))=t$ and $|\nabla d_{p}(x)|_{g}=|\dot{\gamma}_{x}(d_{p}(x))|_{g}=1$.
\end{proof}

\begin{Rem}
If $p$ is a timelike pole on a globally hyperbolic plane $(\mathbb{R}^{2},g)$, then any timelike geodesic starting from $p$ is a timelike ray.
\end{Rem}

Moreover, the appearance of timelike poles leads to the existence of periodic timelike geodesics. Here we call a complete timelike geodesic $\gamma$ on $(\mathbb{R}^2,g)$ periodic if there exist $a>0$ and $\mathbf{k}\in\mathbb{Z}^2\backslash\{0\}$ such that $\gamma(t)+\mathbf{k}=\gamma(t+a)$ for all $t\in\R$. The above fact is the main subject of our first paper \cite{Peng-Jin-Cui} and can be formulated into

\begin{The}[\cite{Peng-Jin-Cui}, Theorem A]\label{exist per}
Let $(\R^{2},g)$ be the Abelian cover of a class A Lorentzian 2-torus and $p$ any timelike pole on it. Then for any rational asymptotic direction $\alpha\in(m^-,m^+)$, there exists a unique periodic timelike line passing through $p$ with asymptotic direction $\alpha$.
\end{The}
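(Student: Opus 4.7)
The plan is to produce the periodic line as a smooth limit of ``pole-maximizers'' anchored at $p$, and then exploit the pole a second time to force periodicity and uniqueness. Fix a primitive $\mathbf{k}\in\Z^{2}\setminus\{0\}$ with $\mathbf{k}/|\mathbf{k}|=\alpha$. Since $\alpha\in(m^{-},m^{+})$, Proposition \ref{future} yields $N_{0}\geq 1$ such that $p+N\mathbf{k}\in I^{+}(p)$ for every $N\geq N_{0}$.

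\textbf{Step 1 (A timelike line through $p$).} For $N\geq N_{0}$ global hyperbolicity furnishes a unit-speed maximizing timelike geodesic $\gamma_{N}\colon[0,a_{N}]\to\R^{2}$ from $p$ to $p+N\mathbf{k}$; because $p$ is a pole, $\exp_{p}$ is a diffeomorphism from $\mathcal{F}^{+}(p)$ onto $I^{+}(p)$ (as used in the proof of Proposition \ref{smooth}), so $\gamma_{N}$ is unique. The displacement $N\mathbf{k}$ lies in a fixed $\mathfrak{T}^{\varepsilon}$, so Theorem \ref{Su4:compact-result} gives a uniform $\delta>0$ with $\dot\gamma_{N}(t)\in\mathrm{Time}(\R^{2},g)^{1,\delta}$, and Proposition \ref{compact_bundle} then provides a uniform Euclidean bound. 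Arzel\`a--Ascoli combined with the geodesic equation extracts a $C^{1}$ subsequential limit $\gamma^{+}\colon[0,\infty)\to\R^{2}$, a future-directed timelike ray from $p$ that is maximal between any two of its points and whose asymptotic direction is $\alpha$ by Theorem \ref{Sc-2}(3). A symmetric construction with maximizers from $p-N\mathbf{k}$ to $p$ gives a past ray $\gamma^{-}\colon(-\infty,0]\to\R^{2}$ ending at $p$. Smooth concatenation at $p$ uses the pole a second time: every timelike ray from $p$ is determined by its initial unit tangent via the $\exp_{p}$ diffeomorphism, and continuity of asymptotic directions along such rays forces the future-directed unit tangents of $\gamma^{\pm}$ at $p$ to coincide, so the concatenation is a timelike line $\gamma\colon\R\to\R^{2}$ through $p$ with direction $\alpha$.

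\textbf{Step 2 (Periodicity and uniqueness).} Once $\gamma$ reaches a lattice point $p+N\mathbf{k}$ at parameter $a>0$, both $T_{N\mathbf{k}}\circ\gamma$ and $t\mapsto\gamma(t+a)$ are timelike lines through the pole $p+N\mathbf{k}$ with direction $\alpha$, so the same pole-uniqueness argument of Step 1 identifies them, giving $\gamma(t+a)=\gamma(t)+N\mathbf{k}$ for all $t\in\R$. For uniqueness, two candidate periodic lines through $p$ with direction $\alpha$ have periods that are integer multiples of $\mathbf{k}$, hence share a common period $N_{1}N_{2}\mathbf{k}$; restricted to one such period each equals the unique pole-maximizer from $p$ to $p+N_{1}N_{2}\mathbf{k}$, and hence they coincide.

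\textbf{Main obstacle.} The most delicate point is ensuring that $\gamma$ actually lands on the orbit $\{p+j\mathbf{k}:j\in\Z\}$ in finite parameter time, rather than merely shadowing it asymptotically. The $C^{1}$-limit $\gamma^{+}$ is only controlled on compact subintervals, and each $\gamma_{N}$ can meander within bounded Euclidean distance of the straight chord from $p$ to $p+N\mathbf{k}$ by Proposition \ref{lemma_Rdist}, so a priori $\gamma^{+}$ at arclength $\lim a_{N}/N$ might only sit at bounded distance from $p+\mathbf{k}$ without ever meeting it. To close this gap I would combine the reverse triangle inequality $d(p,p+2N\mathbf{k})\geq 2d(p,p+N\mathbf{k})$ with Theorem \ref{Su4:d-Lip}: the former, paired with pole-uniqueness of $\gamma_{2N}$, forces $\gamma_{2N}$ to pass through $p+N\mathbf{k}$ whenever equality holds, and the Lipschitz estimate on $d$ promotes this crossing to the limit, placing $\gamma^{+}$ on some $p+N\mathbf{k}$.
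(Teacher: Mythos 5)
This statement is not proved in the present paper: it is imported verbatim from the prequel \cite{Peng-Jin-Cui}, whose main theorem it is, so there is no internal proof to compare against. Judged on its own terms, your uniqueness argument is sound: a periodic timelike line through $p$ with asymptotic direction $\alpha$ has period vector a positive multiple of the primitive $\mathbf{k}$, so two candidates share the period $N_{1}N_{2}\mathbf{k}$ and each restricts on one period to the maximizing segment from $p$ to $p+N_{1}N_{2}\mathbf{k}$, which is unique because $\exp_{p}$ is a diffeomorphism onto $I^{+}(p)$. The construction of the forward and backward rays $\gamma^{\pm}$ is also standard. What is missing is precisely the existence of the \emph{periodic line}, which is the entire content of the theorem.

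Concretely, three steps fail. (i) The gluing at $p$: $\gamma^{+}$ comes from limits of the initial tangents $\dot\gamma_{N}(0)$, while $\gamma^{-}$ comes from the terminal tangents of the translated maximizers, and nothing identifies these two limits. For rational $\alpha$ there may be several distinct timelike rays from $p$ with direction $\alpha$ (the disjointness in Theorem \ref{Sc-2}(2) concerns \emph{lines} with \emph{irrational} direction), so ``continuity of asymptotic direction'' does not pin down the tangent at $p$; and even if the tangents matched, a geodesic that is a ray on each side of $p$ need not be maximal \emph{across} $p$, so one still does not obtain a line. (ii) The identification $T_{N\mathbf{k}}\circ\gamma=\gamma(\cdot+a)$ in Step 2 invokes uniqueness of a timelike line through a pole with prescribed rational direction---but that is the uniqueness assertion of the theorem itself, which you derive only afterwards from periodicity; as written, Step 2 is circular (two distinct timelike lines with the same rational direction may cross once, so sharing the point $p+N\mathbf{k}$ and the direction does not identify them). (iii) Your ``main obstacle'' is indeed the crux, and the proposed repair is also circular: the equality $d(p,p+2N\mathbf{k})=2\,d(p,p+N\mathbf{k})$ says exactly that the concatenation of $\gamma_{N}$ with $T_{N\mathbf{k}}\circ\gamma_{N}$ is maximal, which by pole-uniqueness of $\gamma_{2N}$ is equivalent to $p$ lying on a periodic maximizer of direction $\alpha$---the conclusion sought. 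The reverse triangle inequality gives only ``$\geq$'', and if $p$ lay in a gap of the family of periodic maximizers the inequality would be strict for every $N$. Ruling out that gap scenario requires the order and heteroclinic structure of maximizers from \cite{Sc}, i.e.\ the actual argument of \cite{Peng-Jin-Cui}.
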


\vspace{0.4cm}
\subsection{Timelike eikonal equations on spacetimes}
~\\
Let $(M,g)$ be a general spacetime, the the canonical Hamilton-Jacobi type equation \eqref{te} associated with $(M,g)$ is called timelike eikonal equation. In this paper, we discuss solutions to timelike equation \eqref{te} defined on open subsets of $M$.

Let $u\in C^1(U,\R)$ be a solution to \eqref{te}. Suppose that the vector field $\nabla u$ is future directed, then by Peano's existence theorem from ODE, for any $x\in U$, there exists at least one future directed timelike integral curve $\gamma_{x}:(a_x,b_x)\rightarrow U$ of $\nabla u$ with $\gamma_{x}(0)=x$, where $(a_x,b_x)$ denotes the maximal interval of existence. The following proposition shows that $\gamma_{x}$ is maximal in the sense that defined in Remark \ref{rm1}.

\begin{Pro}\label{prop1}
$\gamma_{x}$ is maximal in the sense that for any $[s,t]\subseteq (a_x,b_x)$ and any future directed piecewise $C^{1}$ timelike curve $\xi:[a,b]\rightarrow U$ such that $u(\xi(a))-u(\xi(b))=u(\gamma_{x}(s))-u(\gamma_{x}(t))$,
$$
L^g(\gamma_{x}|_{[s,t]})\geq L^g(\xi).
$$
\end{Pro}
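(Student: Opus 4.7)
The proof is a standard calibration argument using the reverse (wrong-way) Cauchy--Schwarz inequality for future-directed timelike vectors. The strategy is to show that the function $-u$ is a ``potential'' whose decrease along any future-directed timelike curve bounds the Lorentzian arclength from above, with equality precisely along integral curves of $\nabla u$.

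First I would compute how $u$ evolves along $\gamma_x$. Since $\dot{\gamma}_x = \nabla u \circ \gamma_x$ and $g(\nabla u,\nabla u)=-1$, one has
\begin{equation*}
\frac{d}{d\tau}u(\gamma_x(\tau)) = g(\nabla u,\dot{\gamma}_x) = g(\nabla u,\nabla u) = -1,
\end{equation*}
so $u(\gamma_x(s))-u(\gamma_x(t)) = t-s$ for $s\le t$ in $(a_x,b_x)$. At the same time, the Lorentzian arclength of $\gamma_x|_{[s,t]}$ equals
\begin{equation*}
L^g(\gamma_x|_{[s,t]}) = \int_s^t \sqrt{-g(\nabla u,\nabla u)}\,d\tau = t-s,
\end{equation*}
so $L^g(\gamma_x|_{[s,t]}) = u(\gamma_x(s))-u(\gamma_x(t))$. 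This identifies $-u$ with the Lorentzian arclength parameter along $\gamma_x$.

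Next I would compare this with an arbitrary competitor $\xi:[a,b]\to U$. Because $\nabla u$ is future-directed timelike and $\dot{\xi}$ is (piecewise) future-directed timelike, the reverse Cauchy--Schwarz inequality for future-directed timelike vectors in a Lorentzian 2-plane gives
\begin{equation*}
-g(\nabla u,\dot{\xi}) \;\geq\; \sqrt{-g(\nabla u,\nabla u)}\cdot\sqrt{-g(\dot{\xi},\dot{\xi})} \;=\; \sqrt{-g(\dot{\xi},\dot{\xi})},
\end{equation*}
with equality if and only if $\dot{\xi}$ is a positive multiple of $\nabla u$. Integrating from $a$ to $b$ and using $\frac{d}{d\tau}u(\xi(\tau))=g(\nabla u,\dot{\xi})$ at the $C^1$-pieces (and summing over finitely many pieces for a piecewise $C^1$ curve) yields
\begin{equation*}
u(\xi(a))-u(\xi(b)) \;=\; \int_a^b \bigl(-g(\nabla u,\dot{\xi})\bigr)\,d\tau \;\geq\; \int_a^b \sqrt{-g(\dot{\xi},\dot{\xi})}\,d\tau \;=\; L^g(\xi).
\end{equation*}

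Combining the two observations, the hypothesis $u(\xi(a))-u(\xi(b))=u(\gamma_x(s))-u(\gamma_x(t))$ then gives
\begin{equation*}
L^g(\xi) \;\leq\; u(\xi(a))-u(\xi(b)) \;=\; u(\gamma_x(s))-u(\gamma_x(t)) \;=\; L^g(\gamma_x|_{[s,t]}),
\end{equation*}
which is exactly the desired inequality. There is no real obstacle here: the only point requiring care is the (elementary) reverse Cauchy--Schwarz for future-directed timelike vectors, which holds in any Lorentzian manifold and requires no use of the class A or timelike pole hypotheses. In particular the argument is purely local and does not appeal to any of the previously listed results about $(\mathbb{R}^2,g)$.
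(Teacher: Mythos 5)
Your argument is correct and is essentially identical to the paper's proof: both establish $L^g(\gamma_x|_{[s,t]})=u(\gamma_x(s))-u(\gamma_x(t))$ from $g(\nabla u,\nabla u)=-1$, then apply the reverse Cauchy--Schwarz inequality to bound $L^g(\xi)$ by $u(\xi(a))-u(\xi(b))$ and conclude via the hypothesis. The only (cosmetic) difference is that you explicitly note the summation over the finitely many $C^1$ pieces of $\xi$, which the paper leaves implicit.
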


\begin{proof}
Since $\dot{\gamma}_{x}$ is an integral curve of $\nabla u$, we have
\begin{equation} \label{E4}
\begin{split}
|\dot{\gamma}_x|_g&=\sqrt{-g(\dot{\gamma}_x,\dot{\gamma}_x)}=\sqrt{-g(\nabla u,\nabla u)}=1,\\
L^g(\gamma_x|_{[s,t]})&=\int_s^t |\dot{\gamma}_x|_g\,\mathrm{d}\tau=t-s=u(\gamma_x(s))-u(\gamma_x(t)).
\end{split}
\end{equation}
We assume $\xi:[a,b]\rightarrow U$ to be a $C^1$ future directed timelike curve such that $u(\xi(a))-u(\xi(b))=u(\gamma_{x}(s))-u(\gamma_{x}(t))$. Since both $\nabla u$ and $\dot{\xi}$ are future-directed timelike, we have that $g(\nabla u, \dot{\xi})<0$ and
\begin{equation} \label{E5}
-\mathrm{d}u(\dot{\xi})=-g(\nabla u, \dot{\xi})\geq |\nabla u|_g|\dot{\xi}|_g=|\dot{\xi}|_g,
\end{equation}
here the inequality follows from the reverse Cauchy-Schwarz inequality for causal vectors (\cite{S-W},2.2.1). Combining \eqref{E4} and \eqref{E5}, we get
\begin{equation}
\begin{split}
L^g(\gamma_x|_{[s,t]}) & =u(\gamma_x(s))-u(\gamma_x(t))=u(\xi(a))-u(\xi(b)) \nonumber \\
&=\int_a^b -\mathrm{d}u(\dot{\xi})\,\mathrm{d}\tau\geq\int_a^b |\dot{\xi}|_g\,\mathrm{d}\tau=L^g(\xi),
\end{split}
\end{equation}
which implies the maximality of $\gamma_x$.
\end{proof}

\begin{Rem}
Similar results are well-known in the setting of weak KAM theory and calibrated geometry. In terms of calibrated geometry, $du$ is called a calibration and $\gamma_x$ is calibrated by $du$.
\end{Rem}

\section{Construction of global solutions}
As in \cite{Peng-Jin-Cui}, we use $\alpha$ to denote asymptotic directions, use $m^{\pm}$ to denote the asymptotic directions of two families of lightlike curves. Rational (resp. irrational) directions are defined similar as in \cite{Peng-Jin-Cui}. To begin our construction, we reformulate the main result of \cite{Peng-Jin-Cui} into the following

\begin{Pro}
Let $(\R^{2},g)$ be the Abelian cover of a class A 2-torus and $p\in\R^{2}$ a timelike pole, then for any rational asymptotic direction $\alpha\in(m^-,m^+)$, there exists a unique future directed periodic timelike line $\gamma_{p,\alpha}$ passing through $p$ with asymptotic direction $\alpha$.
\end{Pro}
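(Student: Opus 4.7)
The statement is explicitly introduced as a ``reformulation of the main result of \cite{Peng-Jin-Cui}'', and indeed it coincides almost verbatim with Theorem \ref{exist per}; the only additions are the names $\gamma_{p,\alpha}$ and the explicit mention of ``future directed.'' So my proposal is essentially to invoke Theorem \ref{exist per} and then record a short sketch of the underlying argument for the reader's convenience.

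Concretely, the plan is as follows. Since $\alpha\in(m^-,m^+)$ is rational, I would first fix the primitive vector $\mathbf{k}\in\mathbb{Z}^2$ with $\mathbf{k}/|\mathbf{k}|=\alpha$. Because $\alpha$ is strictly timelike, $\mathbf{k}$ lies in the interior of $\mathfrak{T}$, so iterating Proposition \ref{future} shows that $p+n\mathbf{k}\in I^+(p)$ for all $n\geq 1$. By global hyperbolicity of the class A cover, for each $n$ there is a unit-speed future-directed timelike maximizer $\sigma_n:[0,d(p,p+n\mathbf{k})]\to\mathbb{R}^2$ from $p$ to $p+n\mathbf{k}$, and Theorem \ref{Su4:compact-result} places the initial velocities $\dot\sigma_n(0)$ in a compact subset of $\mathrm{Time}(\mathbb{R}^2,g)^{1,\delta}$ for some $\delta=\delta(\alpha)>0$. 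After extracting a subsequence, $\dot\sigma_n(0)\to v$ for some future-directed unit timelike $v\in T_p\mathbb{R}^2$, and the candidate line is $\gamma_{p,\alpha}(t):=\exp_p(tv)$.

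The substantive work, which I would quote from \cite{Peng-Jin-Cui}, is to verify three properties of $\gamma_{p,\alpha}$: (i) it is defined on all of $\mathbb{R}$; (ii) it is $\mathbf{k}$-periodic, i.e.\ there exists $a>0$ with $\gamma_{p,\alpha}(t+a)=\gamma_{p,\alpha}(t)+\mathbf{k}$; and (iii) it is maximal, hence a timelike line. The pole hypothesis at $p$ enters crucially: by Proposition \ref{smooth}, $d_p$ is $C^1$ on $I^+(p)$ and $\exp_p$ is a diffeomorphism onto $I^+(p)$, so the maximizer from $p$ to any point of $I^+(p)$ is unique. Combined with the isometry $T_\mathbf{k}$ (which sends $p$ to the pole $p+\mathbf{k}$), this uniqueness forces the limit geodesic to satisfy $\gamma_{p,\alpha}(t+a)=\gamma_{p,\alpha}(t)+\mathbf{k}$ for a suitable $a>0$; periodicity then implies completeness, and the absence of conjugate points along geodesics issuing from the poles $p+n\mathbf{k}$ propagates maximality to the whole real line. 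The asymptotic direction is identified as $\alpha$ because the periodicity gives $\gamma_{p,\alpha}(na)=p+n\mathbf{k}$.

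For uniqueness, I would argue by contradiction: a second future-directed periodic timelike line $\tilde\gamma$ through $p$ with direction $\alpha$ would, after matching periods against $\mathbf{k}$, produce two distinct maximizers from $p$ to some $p+n\mathbf{k}\in I^+(p)$, contradicting the $C^1$ character of $d_p$ at that point established via the pole property. The main obstacle in a ``from scratch'' proof is precisely step (iii) above, namely deducing $\mathbf{k}$-periodicity of the limit from the uniqueness of geodesics emanating from the pole and then bootstrapping to global maximality; since \cite{Peng-Jin-Cui} carries this out in detail, I would simply cite it.
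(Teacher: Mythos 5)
Your proposal is correct and matches what the paper actually does: this Proposition is just Theorem~\ref{exist per} (Theorem~A of \cite{Peng-Jin-Cui}) restated with the notation $\gamma_{p,\alpha}$ attached, and the paper offers no proof beyond that citation, which is precisely your main step. Your supplementary sketch of the underlying argument (maximizers to $p+n\mathbf{k}$, compactness of initial velocities via Theorem~\ref{Su4:compact-result}, and uniqueness of maximizers from the pole forcing periodicity and uniqueness of the line) is consistent with the tools the paper quotes, so no further comparison is needed.
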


For any two rational timelike directions $\alpha^-<\alpha^+$, we denote by $D^\pm_{\alpha^-,\alpha^+}(p)\subseteq I^\pm(p)$ the open angular domain bounded by $\gamma_{p,\alpha^-}$ and $\gamma_{p,\alpha^+}$. We note that
$$
p\in\overline{D^\pm_{\alpha^-,\alpha^+}(p)}\subseteq J^\pm(p).
$$
Let $O^+(p,1):=\{x\in \R^2: d(p,x)>1\}$, we set
\begin{equation*}
F_{\alpha^-,\alpha^+}(p):=D^+_{\alpha^-,\alpha^+}(p)\bigcap O^+(p,1).
\end{equation*}
Using that the outer ball $O^{+}(p,1)$ is open in $I^+(p)$, $F_{\alpha^-,\alpha^+}(p)$ is also an open subset of $I^+(p)$. It follows from the notions above that
\begin{Pro}\label{lemma_complete_ray}
Let $(\mathbb{R}^{2},g)$ be the Abelian cover of a class A Lorentzian 2-torus and $p\in\mathbb{R}^{2}$ a timelike pole, then
\begin{enumerate}
  \item through any $x\in D^+_{\alpha^-,\alpha^+}(p)$, there exists a unique timelike ray $\gamma_{x}:[0,\infty)\rightarrow\{p\}\cup D^+_{\alpha^-,\alpha^+}(p)$, parameterized by $g$-arc length, starting from $p$.
  \item if $p^{\prime}\in \overline{D^-_{\alpha^-,\alpha^+}(p)}$ is a timelike pole,
      \begin{equation*}
      D^+_{\alpha^-,\alpha^+}(p)\subseteq D^+_{\alpha^-,\alpha^+}(p^{\prime}),\quad F_{\alpha^-,\alpha^+}(p)\subseteq F_{\alpha^-,\alpha^+}(p^{\prime}).
      \end{equation*}
\end{enumerate}
\end{Pro}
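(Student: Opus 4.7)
For part (1), since $x\in D^+_{\alpha^-,\alpha^+}(p)\subseteq I^+(p)$, Proposition \ref{smooth} makes the distance $d_p$ a $C^1$ solution of the timelike eikonal equation on $I^+(p)$. I would take the integral curve of $\nabla d_p$ through $x$: by Proposition \ref{prop1} it is a future-directed unit-speed timelike maximizer, hence a timelike geodesic; by the Remark following Proposition \ref{smooth} it extends to a full timelike ray $\gamma_x:[0,\infty)\to\R^2$ starting at $p$. Uniqueness is immediate from the diffeomorphism $\exp_p:\mathcal{F}^+(p)\to I^+(p)$ cited in the proof of Proposition \ref{smooth}. To confine $\gamma_x$ to $\{p\}\cup D^+_{\alpha^-,\alpha^+}(p)$, I observe that the bounding curves $\gamma_{p,\alpha^\pm}$ from Theorem \ref{exist per} are themselves timelike rays out of $p$. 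Injectivity of $\exp_p$ on $\mathcal{F}^+(p)$ forbids two distinct rays emanating from $p$ from meeting again in $I^+(p)$, so $\gamma_x$ cannot touch $\gamma_{p,\alpha^-}\cup\gamma_{p,\alpha^+}$ after leaving $p$; by continuity it stays in the open wedge.

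For part (2), the hypothesis $p'\in\overline{D^-_{\alpha^-,\alpha^+}(p)}\subseteq J^-(p)$ gives $p\in J^+(p')$. The plan is to show each bounding line $\gamma_{p',\alpha^\pm}$ of $D^+_{\alpha^-,\alpha^+}(p')$ is disjoint from the open wedge $D^+_{\alpha^-,\alpha^+}(p)$ and lies on the correct side of it. The disjointness rests on the non-crossing of distinct maximal timelike lines sharing an asymptotic direction: were $\gamma_{p',\alpha^-}$ to enter the open wedge $D^+_{\alpha^-,\alpha^+}(p)$, its $\alpha^-$-periodicity and asymptotic direction would force it to exit again, producing two transverse intersections with the maximal line $\gamma_{p,\alpha^-}$, which contradicts the joint global maximality of both lines via the reverse Cauchy-Schwarz inequality (the principle underlying Proposition \ref{prop1}). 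This yields $D^+_{\alpha^-,\alpha^+}(p)\subseteq D^+_{\alpha^-,\alpha^+}(p')$. The corresponding inclusion of $F$-regions then follows from the reverse triangle inequality: for any $x\in F_{\alpha^-,\alpha^+}(p)$, using $p\in J^+(p')$ and $x\in I^+(p)$ one gets $d(p',x)\geq d(p',p)+d(p,x)>1$, hence $x\in O^+(p',1)$.

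The delicate step is the nesting of the bounding periodic lines in part (2), namely verifying that $\gamma_{p',\alpha^-}$ and $\gamma_{p',\alpha^+}$ flank $D^+_{\alpha^-,\alpha^+}(p)$ on the appropriate sides. The non-crossing argument above must be performed with careful attention to the planar orientation fixed in Section 2 so that ``above'' and ``below'' are well-defined relative to each direction $\alpha^\pm$, and may require invoking the Aubry-Mather-type ordering on maximal lines from \cite{Sc}. A likely cleaner route is to exploit $\Z^2$-periodicity: translating $\gamma_{p',\alpha^\pm}$ by the deck transformations aligned with $\alpha^\pm$ reduces the crossing analysis to a compact fundamental cell, where standard planar separation arguments apply directly and rule out the transverse double intersection described above.
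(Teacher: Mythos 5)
Your overall strategy matches the paper's: for (1) produce the unique maximal geodesic from $p$ through $x$, extend it, and use a non-recrossing argument to confine it to the wedge; for (2) use that periodic lines with the same asymptotic direction cannot cross, plus $I^+(p)\subseteq I^+(p')$ and the reverse triangle inequality. Your substitutions (injectivity of $\exp_p$ in place of the Morse crossing lemma; the gradient flow of $d_p$ in place of the maximal segment from global hyperbolicity) are legitimate variants.

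However, there is one genuine gap in part (1): you assert that the Remark following Proposition \ref{smooth} gives a ray $\gamma_x:[0,\infty)\to\R^2$ parameterized by $g$-arc length. That Remark (and Definition \ref{Def_line}) only says the geodesic is future \emph{inextendible} and maximal; for timelike geodesics in a Lorentzian manifold inextendibility does not imply completeness of the affine parameter, and the statement you are proving explicitly claims the domain is all of $[0,\infty)$. The paper closes this by first establishing the confinement to the wedge, deducing from \cite{Jin-Cui} that $\gamma_x$ has an asymptotic direction in $[\alpha^-,\alpha^+]\subseteq\mathfrak{T}^{\varepsilon}$, and then invoking Theorem \ref{Su4:compact-result} together with Proposition \ref{compact_bundle}: the unit tangent vectors of $\gamma_x$ stay in the compact set $\text{Time}(\R^2,g)^{1,\delta}$, so the Euclidean and $g$-arc-length parameters are comparable and inextendibility forces the $g$-arc-length domain to be $[0,\infty)$. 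Note the logical order: confinement and asymptotic direction must come \emph{before} the completeness claim, not after. A secondary, smaller issue is in part (2): your claim that a periodic line $\gamma_{p',\alpha^-}$ entering the open wedge would be ``forced to exit again'' is not automatic (a curve asymptotic to direction $\alpha^-$ could in principle remain in the wedge); the clean statement, which is what the paper uses, is that $\gamma_{p',\alpha^{\pm}}$ and $\gamma_{p,\alpha^{\pm}}$ either coincide or are disjoint (one transverse intersection of two periodic maximal lines with the same direction propagates by deck transformations to a second intersection, contradicting maximality), and this disjointness combined with $I^+(p)\subset I^+(p')$ already yields the wedge inclusion.
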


\begin{proof}
(1) For any $x\in D^+_{\alpha^-,\alpha^+}(p)$, by global hyperbolicity of $(\mathbb{R}^{2},g)$, there exists a future-directed maximal timelike geodesic segment $\hat{\gamma}_{x}$ connecting $p$ with $x$. Since $p$ is a timelike pole, then from Proposition \ref{smooth} or \cite[Corollary 5.3 and Remark 5.4]{Peng-Jin-Cui}, we know that $\hat{\gamma}_{px}$ is unique and can be extended to a future-directed timelike ray $\gamma_{x}$.

By the Lorentzian version of Morse's crossing lemma \cite[Section 4]{Jin-Cui}, two maximal timelike geodesic segments will not intersect twice except that the intersections occurred at two endpoints. So $Im(\gamma_{x})\subseteq \{p\}\cup D^+_{\alpha^-,\alpha^+}(p)$. In this case, since $\gamma_{x}$ is a timelike ray, by \cite[Lemma 4.4, Theorem 4.5]{Jin-Cui}, $\gamma_{x}$ has an asymptotic direction $\alpha\in[\alpha^{-},\alpha^{+}]$. Thus we apply Theorem \ref{Su4:compact-result} and Proposition \ref{compact_bundle} to conclude that the domain of the affine parameter of $\gamma_{x}$ is $[0,\infty)$.

(2) Since $p^{\prime}$ is a timelike pole, there exist unique periodic timelike lines $\gamma_{p^{\prime},\alpha^\pm}$ passing through $p^{\prime}$ with asymptotic direction $\alpha^\pm$. Thus, $\gamma_{p^{\prime},\alpha^{\pm}}$ and $\gamma_{p,\alpha^{\pm}}$ either coincide or have no intersection since they have the same asymptotic direction. Together with $D^+_{\alpha^-,\alpha^+}(p)\subseteq I^+(p)\subset I^+(p^{\prime})$, we conclude that $D^+_{\alpha^-,\alpha^+}(p)\subseteq D^+_{\alpha^-,\alpha^+}(p^{\prime})$.

Since $p^{\prime}\in \overline{D^-_{\alpha^-,\alpha^+}(p)}\subseteq J^-(p)$, one deduces $O^+(p,1)\subseteq O^+(p^{\prime},1)$ by the reverse triangular inequality. Therefore, $F_{\alpha^-,\alpha^+}(p)\subseteq F_{\alpha^-,\alpha^+}(p^{\prime})$.
\end{proof}

\begin{Rem}
From the proof, it is easy to see that the time dual version of $(1)$ is also true, i.e. through any $x\in D^-_{\alpha^-,\alpha^+}(p)$, there exists a unique timelike ray $\gamma_{x}:(-\infty,0]\rightarrow D^-_{\alpha^-,\alpha^+}(p)\cup\{p\}$ with $\gamma_{x}(0)=p$.
\end{Rem}

The next proposition shows the equi-Lipschitz property of $\{\nabla d_{p_{i}}\}_{i\in\mathbb{N}}$ when $p_{i},i\in\mathbb{N}$ are timelike poles and $p_{i+1}\in \overline{D^-_{\alpha^-,\alpha^+}(p_{i})}$.
\begin{Pro} \label{prop2}
Let $(\mathbb{R}^{2},g)$ be the Abelian cover of a class A Lorentzian 2-torus and $p\in\mathbb{R}^{2}$ a timelike pole as above. If $\alpha^{\pm}\in\mathfrak{T}^{\varepsilon}$, there exists $H(\varepsilon)>0$ such that for any timelike pole $p^{\prime}\in \overline{D^-_{\alpha^-,\alpha^+}(p)}$, $\nabla d_{p^{\prime}}$ is $H$-Lipschitz with respect to the Euclidean metric on $F_{\alpha^-,\alpha^+}(p)$.
\end{Pro}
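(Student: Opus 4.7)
The strategy is to combine the smoothness of $d_{p'}$ on $I^+(p')$ with the uniform class~A estimates (Theorems \ref{Su4:d-Lip} and \ref{Su4:compact-result}), and to exploit the $\Z^2$-invariance of $g$ in order to tame the unbounded family of admissible poles $p'$. As a preliminary, by Proposition \ref{lemma_complete_ray}(2) one has $F_{\alpha^-,\alpha^+}(p)\subseteq F_{\alpha^-,\alpha^+}(p')\subseteq I^+(p')$, so Proposition \ref{smooth} together with the smoothness of $|\cdot|_g$ on timelike vectors actually makes $d_{p'}$ a $C^\infty$-function on $F_{\alpha^-,\alpha^+}(p)$; in particular $\nabla d_{p'}$ is smooth and the whole task reduces to producing a Lipschitz constant $H$ for it depending only on $\varepsilon$.

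The first substantial step is a uniform cone/size condition: there exist $\varepsilon'=\varepsilon'(\varepsilon)>0$ and $K'=K'(\varepsilon)<\infty$ such that $z-p'\in\mathfrak{T}^{\varepsilon'}\setminus B^{|\cdot|}_{K'}(0)$ for every $z\in F_{\alpha^-,\alpha^+}(p)$ and every admissible $p'\in\overline{D^-_{\alpha^-,\alpha^+}(p)}$. The angular part rests on Proposition \ref{lemma_Rdist}: the periodic lines $\gamma_{p,\alpha^\pm}$ and $\gamma_{p',\alpha^\pm}$ differ from the straight rays $\overline{\alpha^\pm}$ by a uniformly bounded Euclidean amount, so both $z-p$ and $p-p'$ lie in $\mathfrak{T}$ modulo a universal perturbation, and their sum $z-p'$ is forced into the interior $\mathfrak{T}^{\varepsilon'}$ once $|z-p'|$ is large enough. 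The Euclidean lower bound on $|z-p'|$ comes from $d(p,z)>1$ (continuity and $\Z^2$-invariance of $d$ yield $|z-p|\ge\eta(g)>0$) combined with the observation that $z-p$ and $p-p'$ point into a common narrow sector, so $|z-p'|$ is comparable to $|z-p|+|p-p'|$.

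With the cone condition in hand, Theorem \ref{Su4:d-Lip} gives that $d_{p'}$ is $L(\varepsilon')$-Lipschitz on $F_{\alpha^-,\alpha^+}(p)$, hence $|\nabla d_{p'}|\le L$ pointwise. Theorem \ref{Su4:compact-result} combined with Proposition \ref{compact_bundle} further confines the unit tangent vectors $\dot\gamma_{p',z}(t)$ of the unique maximizer from $p'$ to $z$ to a fixed compact subset $\mathcal{K}\subset\mathrm{Time}(\R^2,g)^{1,\delta(\varepsilon')}$ depending only on $\varepsilon$ and $g$. Since $\nabla d_{p'}(z)=\dot\gamma_{p',z}(d_{p'}(z))$ and this assignment factors through the smooth diffeomorphism $\exp_{p'}^{-1}$ followed by normalization and the geodesic flow, its spatial Lipschitz constant is controlled by Jacobi field quantities along maximizers whose initial unit tangents lie in $\mathcal{K}$; uniform two-sided bounds on these Jacobi fields follow from the bounded geometry of $g$ ($\Z^2$-periodic on a compact torus), yielding the required constant $H(\varepsilon)$.

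The main obstacle is precisely the uniformity over the non-compact family of poles $p'$. This is handled by the deck isometries: given any $p'$, choose $\mathbf{k}\in\Z^2$ so that $p'-\mathbf{k}$ lies in a fixed fundamental domain, and note that $T_{-\mathbf{k}}$ carries $d_{p'}$ on $F_{\alpha^-,\alpha^+}(p)$ to $d_{p'-\mathbf{k}}$ on the translate $F_{\alpha^-,\alpha^+}(p)-\mathbf{k}$ with identical Lipschitz constants. After this reduction the constants from Theorems \ref{Su4:d-Lip}, \ref{Su4:compact-result} and Proposition \ref{compact_bundle} -- each depending only on $g$ and $\varepsilon$ -- supply the desired $\varepsilon$-uniform bound $H(\varepsilon)$.
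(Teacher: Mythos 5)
Your preliminary reductions are fine and broadly consistent with what the paper does: the cone condition $z-p'\in\mathfrak{T}^{\varepsilon'}\setminus B^{|\cdot|}_{K'}(0)$, the resulting pointwise bound $|\nabla d_{p'}|\le L(\varepsilon')$ via Theorem \ref{Su4:d-Lip}, the confinement of unit tangents to a compact set via Theorem \ref{Su4:compact-result} and Proposition \ref{compact_bundle}, and the use of deck transformations to normalize $p'$. But the core step --- upgrading a pointwise bound on $\nabla d_{p'}$ to a \emph{Lipschitz} bound on $\nabla d_{p'}$ --- is where your argument has a genuine gap. You assert that the spatial Lipschitz constant of $z\mapsto\dot\gamma_{p'z}(d_{p'}(z))$ is ``controlled by Jacobi field quantities'' and that ``uniform two-sided bounds on these Jacobi fields follow from the bounded geometry of $g$.'' This is not true as stated: the geodesic segments from $p'$ to $z$ have unbounded Lorentzian length over the family in question, and a Jacobi field vanishing at $p'$ along a maximizer of length $T$ can grow like $e^{cT}$ even in bounded geometry. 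The quantity that actually controls the Lipschitz constant of $\nabla d_{p'}$ is the Hessian of $d_{p'}$, i.e.\ the Riccati solution $J'(T)J(T)^{-1}$, and only one of the two required bounds on it follows from a curvature bound alone; the other degenerates without additional input from the maximality/no-conjugate-point structure. Bounded geometry plus compactness of initial directions does not deliver the two-sided bound you need.

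The paper closes exactly this gap with a localization trick that your proposal never invokes (and which explains why $F_{\alpha^-,\alpha^+}(p)$ is defined inside $O^+(p,1)$, i.e.\ with $d(p,x)>1$). For $x$ with $d_p(x)=\tau>1$ one takes the ray $\gamma_{px}$ through $x$ and forms the support functions
$f^-_{px}(y)=\tau-1+d(\gamma_{px}(\tau-1),y)$ and $f^+_{px}(y)=\tau+1-d(y,\gamma_{px}(\tau+1))$;
by the reverse triangle inequality these are lower and upper support functions for $d_p$ at $x$, they are smooth near $x$ because the unit-length subsegments $\gamma_{px}|_{[\tau-1,\tau]}$ and $\gamma_{px}|_{[\tau,\tau+1]}$ are maximal and hence free of cut points, and the Lorentzian Hessian comparison theorem applied over these \emph{unit-length} segments (where the bounded Gauss curvature and the velocity bounds from Theorem \ref{Su4:compact-result} do give uniform control) yields $-H\cdot Id\le \text{Hess}[d_p](x)\le H\cdot Id$ in the support sense, whence the Lipschitz bound on $\nabla d_p$. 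Note that the upper support function requires the geodesic to extend maximally one unit of proper time \emph{beyond} $x$, which is precisely the timelike-ray property coming from the pole hypothesis (Proposition \ref{lemma_complete_ray}(1)); this is a second place where your appeal to bounded geometry alone is insufficient. To repair your proof you would need to insert this two-sided support-function comparison (or an equivalent Riccati comparison on unit subsegments) in place of the Jacobi-field assertion.
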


\begin{proof}
We shall focus on the case when $p^{\prime}=p$, general cases are completely similar. To deduce $\nabla d_p$ is $L_{0}$-Lipschitz, it is sufficient to show that
\begin{equation}\label{hess es}
-H\cdot Id\leq\text{Hess }[d_{p}](x)\leq H\cdot Id
\end{equation}
in the sense of upper and lower functions.

For any $x\in F_{\alpha^-,\alpha^+}(p)$, we set $d_{p}(x)=\tau>1$. By Lemma \ref{lemma_complete_ray}, there exists a unique timelike ray $\gamma_{px}:[0,\infty)\rightarrow\{p\}\cup D^+_{\alpha^-,\alpha^+}(p)$ such that $\gamma_{px}(0)=p$ and $\gamma_{px}(\tau)=x$. In a neighborhood $U_x$ of $x$, we define
\begin{equation}\label{def spt}
\begin{split}
f^-_{px}(y):=\tau-1+d(\gamma_{px}(\tau-1),y),\\
f^+_{px}(y):=\tau+1-d(y,\gamma_{px}(\tau+1)).
\end{split}
\end{equation}
If $U_x\subset I^{+}(\gamma_{x}(\tau-1))\cap I^{-}(\gamma_{x}(\tau+1))$, by the reverse triangular inequality, for any $y\in U_x$,
$$
f^-_{px}(y)\leq d_p(y)\leq f^+_{px}(y)
$$
with both equalities at $y=x$. Thus, $f^-_{px}$ (resp. $f^+_{px}$) is a lower (resp. upper) support function for $d_p$ at $x$. Since $\gamma_{px}$ maximizes the distance between its points, $\gamma_{px}|_{[\tau-1,\tau]}$ and $\gamma_{px}|_{[\tau,\tau+1]}$ are free of cut points. This implies $f^{\pm}_{px}$ are smooth near $x$, see \cite[Proposition 9.29]{B-E-E}.

Since we are in the case of dimension two, by Lorentzian version of the Hessian Comparison Theorem \cite[Section 9.4]{A-M-R}, we can give a lower bound for the Hessian (defined in terms of the Levi-Civita connection w.r.t. $g$) of
\begin{equation*}
y\mapsto d(\gamma_{px}(\tau-1),y)
\end{equation*}
in terms of the bounds for $|\dot{\gamma}_{px}|$ and Gauss curvature along $\gamma_{px}|_{[\tau-1,\tau]}$. Since Gauss curvature of $g$ is uniformly bounded and $\alpha^{\pm}\in\mathfrak{T}^{\varepsilon}$, we use Theorem \ref{Su4:compact-result} to conclude that there is $H>0$ such that
$$
-H\cdot Id\leq\text{Hess }[y\mapsto d(\gamma_{px}(\tau-1),y)](x)=\text{Hess }[f^{-}_{px}](x),
$$
where the second equality follows from equation \eqref{def spt}. This shows the left hand of inequalities \eqref{hess es}. To finish the proof, we observe that the dual argument holds for $f^{+}_{px}$.
\end{proof}

\begin{Rem}
We also refer to \cite[Theorem 3.1]{E-G-K} for the Lorentzian Hessian comparisons. Same method can be used to prove the locally semiconvex property for certain generalized Lorentzian distance functions, see \cite[Proposition 3.1]{A-G-H1} and \cite[Theorem 5.10]{Jin-Cui}.
\end{Rem}

\textit{Assumptions:}
We say a sequence of timelike poles $\{p_i\}_{i\in\mathbb{N}}$ on $(\mathbb{R}^2,g)$ satisfies condition $(*)$ if there exist $\alpha^\pm\in(m^-,m^+)$ with $\alpha^-<\alpha^+$ such that
\begin{equation*}
p_{i+1}\in D^-_{\alpha^-,\alpha^+}(p_{i}),\hspace{0.2cm} d(p_{i+1},p_i)\geq1
\end{equation*}
for all $i\in\mathbb{N}$ and
\begin{equation*}
\bigcup\limits_{i\in\mathbb{N}} D^+_{\alpha^-,\alpha^+}(p_i)=\mathbb{R}^2.
\end{equation*}

\begin{Rem}\label{rm3}
If $\alpha^\pm\in\mathfrak{T}^{\varepsilon_{0}}$, then for any $\varepsilon<\varepsilon_{0}$ and any compact set $K\subseteq \mathbb{R}^2$, there exists $N\in\mathbb{N}$ such that $x-p_i\in \mathfrak{T}^{\varepsilon}$ for all $x\in K, i\geq N$.
\end{Rem}

The following proposition shows that if $\{p_{i}\}_{i\in\mathbb{N}}$ satisfies $(*)$, then up to constants, $d_{p_{i}}$ converges to a $C^{1,1}$ global solution to the timelike eikonal equation \eqref{te}.

\begin{Pro} \label{prop3}
Let $\alpha^\pm\in(m^-,m^+)$ with $\alpha^-<\alpha^+$ and $\{p_i\}_{i\in\N}$ be a sequence of timelike poles in $(\R^2,g)$ satisfying condition $(*)$. Set $u_{i}:=-d_{p_i}, i\in\N$. If for some $x_{0}\in \R^2$, the sequence $\{u_{i}(x_{0})\}_{i\in\N}$ has a convergent subsequence, then there exists a subsequence $\{u_{i_{k}}\}_{k\in\N}$ of $\{u_{i}\}_{i\in\N}$ converges to a solution $u\in C^1(\R^2,\R)$ to Equation \eqref{te} in the $C^{1}$ topology, i.e., $\lim_{k\rightarrow\infty} u_{i_{k}}=u$ and $\lim_{k\rightarrow\infty}\nabla u_{i_{k}}=\nabla u$ uniformly on compact subsets.
\end{Pro}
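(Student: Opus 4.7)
The plan is a compactness argument: on any compact $K\subset\R^2$, I would show that both $\{u_i\}$ and $\{\nabla u_i\}$ are uniformly bounded and equi-Lipschitz for all sufficiently large $i$, then extract a $C^1$-convergent subsequence via Arzel\`a--Ascoli and a diagonal argument, and finally pass the eikonal equation to the limit by continuity.

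First I would set up the domains. Proposition \ref{lemma_complete_ray}(2) applied to the hypothesis $p_{i+1}\in D^-_{\alpha^-,\alpha^+}(p_i)$ gives the monotonicity $F_{\alpha^-,\alpha^+}(p_i)\subseteq F_{\alpha^-,\alpha^+}(p_{i+1})$, so combined with $\bigcup_i D^+_{\alpha^-,\alpha^+}(p_i)=\R^2$, any compact $K$ lies inside $F_{\alpha^-,\alpha^+}(p_N)$ for some $N=N(K)$; iterating Proposition \ref{lemma_complete_ray}(2) also gives $p_i\in\overline{D^-_{\alpha^-,\alpha^+}(p_N)}$ for all $i\geq N$. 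Fix now $\varepsilon<\varepsilon_0$ with $\alpha^\pm\in\mathfrak{T}^\varepsilon$. By Remark \ref{rm3}, enlarging $N$ if necessary, $x-p_i\in\mathfrak{T}^\varepsilon\setminus B^{|\cdot|}_{K(\varepsilon)}(0)$ for every $x\in K$ and every $i\geq N$. Theorem \ref{Su4:d-Lip} then furnishes a uniform Lipschitz constant $L(\varepsilon)$ for $u_i=-d(p_i,\cdot)$ on $K$, while Proposition \ref{prop2} with base pole $p_N$ and $p'=p_i$ gives a uniform Lipschitz constant $H(\varepsilon)$ for $\nabla u_i$ on $F_{\alpha^-,\alpha^+}(p_N)\supseteq K$. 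Euclidean boundedness of the unit timelike fields $\nabla u_i$ on $K$ follows from Theorem \ref{Su4:compact-result}, applied to the unique maximizing geodesic from $p_i$ to $x$, combined with Proposition \ref{compact_bundle}.

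With equi-Lipschitz and equi-bounded families on each such $K$, I would use the hypothesis that $\{u_{i_k}(x_0)\}$ converges along some subsequence to anchor the values, and apply Arzel\`a--Ascoli on an exhaustion $K_n\nearrow\R^2$ together with a diagonal extraction to obtain a subsequence $u_{i_k}\to u$ uniformly on compact sets. A second Arzel\`a--Ascoli application to the equi-Lipschitz vector fields $\{\nabla u_{i_k}\}$ yields $\nabla u_{i_k}\to V$ uniformly on compact sets for some continuous $V$; the usual line-integral/fundamental-theorem-of-calculus argument then shows $u\in C^1$ with $\nabla u=V$. Pointwise passage in the identity $g(\nabla u_{i_k},\nabla u_{i_k})=-1$, valid by Proposition \ref{smooth}, completes the proof. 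The main technical point I expect to scrutinize is the uniform-in-$i$ Lipschitz control on the gradients: Proposition \ref{prop2} produces a constant depending on the angular domain bounded by the base pole's periodic lines, so the argument hinges on exploiting the monotonicity of the sets $F_{\alpha^-,\alpha^+}(p_i)$ so that one fixed $p_N$ can serve as a common base for every later $p_i$.
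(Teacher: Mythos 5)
Your proposal is correct and follows essentially the same route as the paper's proof: Remark \ref{rm3} to place a compact set in $\mathfrak{T}^{\varepsilon}$ relative to $p_i$, Theorem \ref{Su4:compact-result} with Proposition \ref{compact_bundle} for uniform boundedness of the gradients, Proposition \ref{prop2} for their equi-Lipschitz property, Arzel\`a--Ascoli with a diagonal argument, and a line-integral identification of the limit gradient. Your explicit use of the monotonicity $F_{\alpha^-,\alpha^+}(p_i)\subseteq F_{\alpha^-,\alpha^+}(p_{i+1})$ to fix a common base pole $p_N$ when invoking Proposition \ref{prop2} is a detail the paper leaves implicit, but it is the intended mechanism.
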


\begin{proof}
Since $p_i$ is a timelike pole for any $i\in\N$, $u_i=-d_{p_i}$ is a smooth solution to Equation \eqref{te} in $I^+(p_i)$ by Proposition \ref{smooth}. We assume $\alpha^{\pm}\in\mathfrak{T}^{\varepsilon_{0}}$ and set $\varepsilon=\frac{\varepsilon_{0}}{2}$.

Now we fix any compact set $K\subseteq \R^2$. By Remark \ref{rm3}, there exist $R(\varepsilon)>0$ and $N(\varepsilon,K)\in\mathbb{N}$ such that if $x\in K,i\geq N$, then $x-p_i\in \mathfrak{T}_{\varepsilon}\backslash B_{R(\varepsilon)}(0)$. Hence, by Proposition \ref{future}, we may assume $K\subseteq I^{+}(p_{i})$ for $i\geq N$ by taking a larger $N$. Let $\gamma_{p_ix}$ be the unit speed geodesic ray starting from $p_i$ and going through $x$, then by Theorem \ref{Su4:compact-result}, there exists $\delta(\varepsilon)>0$ such that
\begin{equation}\label{eqn3-2}
\nabla u_i(x)=\dot{\gamma}_{p_ix}(0)\in\text{Time}(\R^2,g)^{1,\delta}.
\end{equation}
Hence from \eqref{eqn3-2} and Proposition \ref{compact_bundle}, $\{|\nabla u_i|\}_{i\geq N}$ is uniformly bounded on $K$. Using Proposition \ref{prop2}, $\{\nabla u_i\}_{i\in\N}$ is also uniformly Lipschitz on $K$.

Thus by Ascoli-Arzela theorem and a diagonal sequence argument, there exists a subsequence $\{\nabla u_{i_k}\}_{k\in\N}$ of $\{\nabla u_i\}_{i\in\N}$ converges uniformly on compact sets to a locally Lipschitz vector field $X$ in $\R^2$, that is, $\lim_{k\rightarrow\infty}\nabla u_{i_{k}}=X$. Since $\{u_{i}(x_{0})\}_{i\in\N}$ has a convergent subsequence, we can assume that $\lim_{k\rightarrow\infty}u_{i_k}(x_{0})$ exists.

For any (piecewise) $C^1$ curve $\gamma:[0,1]\rightarrow \R^2$ with $q=\gamma(0)$, we have
\begin{equation*}
\lim_{k\rightarrow\infty}[u_{i_k}(\gamma(1))-u_{i_k}(x_{0})]=\lim_{k\rightarrow\infty}\int_0^1 g(\nabla u_{i_k}|_{\gamma(t)},\dot{\gamma}(t))dt=\int_0^1 g(X|_{\gamma(t)},\dot{\gamma}(t))dt.
\end{equation*}
Note that $u(\gamma(1))-u(x_{0})=\int_0^1 g(\nabla u|_{\gamma(t)},\dot{\gamma}(t))dt$. Since $\lim_{k\rightarrow\infty}u_{i_k}(x_{0})$ exists, the arbitrary choice of $\gamma$ implies that $\{u_{i_{k}}\}_{k\in\N}$ converges uniformly on compact sets to a function $u: \R^2\rightarrow\R$ and $\text{grad}u=X$. This completes the proof.
\end{proof}

\begin{Rem}\label{uniform Lip}
The global solution $u$ to Equation \eqref{te} constructed above, are Lipschitz with constant $L(\varepsilon)$ since for any compact set $K$, $u_i$ are Lipschitz with constant $L(\varepsilon)$ on $K\subseteq\{x\in\R^2|x-p_i\in\mathfrak{T}^{\varepsilon}\backslash B_{R(\varepsilon)}(0)\}$ when $i$ is large enough (see Theorem \ref{Su4:d-Lip}).
\end{Rem}

The following well-known limit curve lemma is useful in our proof.
\begin{Lem}[{\cite[Lemma 2.1]{Galloway}}]\label{lemma_limit_curve}
Let $\gamma_i:\mathbb{R}\rightarrow M$ be a sequence of inextendible causal curves parameterized with Euclidean arc-length. If $p\in M$ is an accumulation point of the sequence $\{\gamma_i(0)\}$, then there exists an inextendible causal curve $\gamma:\mathbb{R}\rightarrow M$ such that $\gamma(0)=p$ and a subsequence $\{\gamma_j\}$ which converges to $\gamma$ uniformly (w.r.t. $g_R$) on compact intervals of $\mathbb{R}$. $\gamma$ is called a limit curve of $\gamma_i$.
\end{Lem}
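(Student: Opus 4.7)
The argument is the classical Arzel\`a-Ascoli limit-curve construction, with Proposition~\ref{lemma_Rdist} doing the work to guarantee inextendibility.

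First I would extract a uniformly convergent subsequence. Since each $\gamma_i$ is parametrised by Euclidean arc length it is $1$-Lipschitz with respect to $\mathrm{dist}_{|\cdot|}$, so $\{\gamma_i\}$ is equicontinuous on every compact interval. Passing to a subsequence with $\gamma_i(0)\to p$, every $\gamma_i([-n,n])$ lies in a fixed compact Euclidean ball about $p$ (all closed Euclidean balls in $M=\R^2$ or $\T^2$ being compact). Arzel\`a-Ascoli on each $[-n,n]$ followed by a diagonal extraction then yields a subsequence, still denoted $\{\gamma_j\}$, converging uniformly on every compact subinterval of $\R$ to a $1$-Lipschitz curve $\gamma:\R\to M$ with $\gamma(0)=p$.

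Next I would check that $\gamma$ is causal. For $s<t$ and every $j$, $\gamma_j(t)\in J^+(\gamma_j(s))$; global hyperbolicity of the Abelian cover implies $J^+$ is closed in $M\times M$, so letting $j\to\infty$ gives $\gamma(t)\in J^+(\gamma(s))$. Combined with the $1$-Lipschitz regularity and Rademacher's theorem, $\dot\gamma$ lies in the closed causal cone almost everywhere, so $\gamma$ is causal in the Lipschitz sense.

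The step I expect to be the main obstacle is inextendibility. Suppose, for contradiction, that $\gamma$ has a future endpoint $q$. Fix $\varepsilon>0$ so small that $4B(g)\varepsilon<1$, where $B(g)$ is the constant from Proposition~\ref{lemma_Rdist}, and choose $T$ with $\gamma([T,\infty))\subset B^{|\cdot|}_{\varepsilon}(q)$. For any $n\in\N$, uniform convergence on $[T,T+n]$ places both $\gamma_j(T)$ and $\gamma_j(T+n)$ in $B^{|\cdot|}_{2\varepsilon}(q)$ once $j$ is large, so Proposition~\ref{lemma_Rdist} applied to the causal segment $\gamma_j|_{[T,T+n]}$ yields $n=L^{|\cdot|}(\gamma_j|_{[T,T+n]})\le 4B(g)\varepsilon<1$, which is absurd once $n\ge 1$. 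The past case is identical. The key insight is that Proposition~\ref{lemma_Rdist} converts the Euclidean arc-length parametrisation of the approximants into a hard upper bound on the Euclidean distance they can span, which is incompatible with accumulation at a single point $q$; the preceding compactness and closed-cone steps are entirely routine.
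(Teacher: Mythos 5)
The paper offers no proof of this lemma; it is quoted verbatim from Galloway--Horta, so there is nothing internal to compare against. Your argument is the standard Arzel\`a--Ascoli limit-curve construction and is correct in this setting: equicontinuity from the unit Euclidean speed, closedness of $J^{+}$ from global hyperbolicity of the Abelian cover, and inextendibility from Proposition~\ref{lemma_Rdist}. The one step you should make explicit is non-degeneracy of the limit: closedness of $J^{+}$ plus Lipschitz regularity does not by itself exclude $\gamma$ being constant on an interval (a constant map trivially satisfies $\gamma(t)\in J^{+}(\gamma(s))$), so $\gamma$ might fail to be a causal \emph{curve} in the a.e.-nonvanishing-derivative sense. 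This is repaired by the same estimate you already invoke: applying Proposition~\ref{lemma_Rdist} to $\gamma_j|_{[s,t]}$ gives $t-s=L^{|\cdot|}(\gamma_j|_{[s,t]})\le B(g)\,|\gamma_j(t)-\gamma_j(s)|$, and passing to the limit yields $|\gamma(t)-\gamma(s)|\ge (t-s)/B(g)$, which both forces $|\dot\gamma|\ge 1/B(g)$ a.e.\ and gives inextendibility directly (the curve leaves every compact set), so your separate contradiction argument, while correct, is not even needed. This use of the class A estimate is a legitimate and in fact simpler route than the general argument in the cited reference, since the lemma is only ever applied on the Abelian cover.
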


We need the following elementary lemmas to proceed.

\begin{Lem} \label{lemma_e_cone}
For any $\alpha\in(m^-,m^+), C>0$ and any timelike ray $\gamma\in\mathscr{R}_\alpha$, where $\gamma:[0,\infty)\rightarrow (\mathbb{R}^2,g)$ is parameterized by Euclidean arc-length, there exist positive constants $\varepsilon=\varepsilon(\alpha,g), T=T(C,\alpha,g)$ such that for all $t\geq T,x\in B^{|\cdot|}_C(\gamma(t))$,
\begin{equation*}
x-\gamma(0)\in \mathfrak{T}^\varepsilon.
\end{equation*}
\end{Lem}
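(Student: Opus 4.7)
The core idea is geometric: because $\alpha$ lies strictly inside the timelike arc $(m^-,m^+)$, the ray $\overline{\alpha}$ sits well inside the stable time cone $\mathfrak{T}$; the asymptotic direction hypothesis forces $\gamma(t)-\gamma(0)$ to track $\overline{\alpha}$ within a universal distance $D(g)$; and for $t$ large, Proposition~\ref{lemma_Rdist} makes $|\gamma(t)-\gamma(0)|$ grow linearly in $t$, so the bounded perturbations coming from $C$ and $D$ become negligible.

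First I would introduce the angular constant $c_{0}:=\text{dist}_{|\cdot|}(\alpha,\partial\mathfrak{T})>0$, and record that since $\mathfrak{T}$ is a cone with apex at the origin, every $y'=s\alpha\in\overline{\alpha}$ satisfies $\text{dist}_{|\cdot|}(y',\partial\mathfrak{T})=c_0 s=c_0|y'|$. I set $\varepsilon:=c_0/2$, which depends only on $\alpha$ and $g$. Fixing $x\in B^{|\cdot|}_C(\gamma(t))$ and writing $y:=x-\gamma(0)$, Definition~\ref{def ap} combined with Remark~\ref{uniform dis} and the triangle inequality give $\text{dist}_{|\cdot|}(y,\overline{\alpha})\leq C+D$. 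Letting $y'=s\alpha$ be the nearest point of $\overline{\alpha}$ to $y$, so that $|y-y'|\leq C+D$ and hence $|y'|\geq |y|-(C+D)$,
\[
\text{dist}_{|\cdot|}(y,\partial\mathfrak{T})\;\geq\;\text{dist}_{|\cdot|}(y',\partial\mathfrak{T})-|y-y'|\;\geq\;c_0|y|-(c_0+1)(C+D),
\]
which exceeds $\varepsilon|y|$ as soon as $|y|\geq M$ for an explicit $M=M(C,\alpha,g)$. The same estimate places $y$ strictly inside $\mathfrak{T}$, since once $c_0 s>|y-y'|$ the Euclidean ball of radius $c_0 s$ about $y'$ is contained in $\mathfrak{T}$; hence $y\in\mathfrak{T}^{\varepsilon}$.

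To convert the condition $|y|\geq M$ into a threshold on $t$, I would use that $\gamma|_{[0,t]}$ is a causal curve of Euclidean length exactly $t$. Proposition~\ref{lemma_Rdist} then yields $t\leq B(g)|\gamma(t)-\gamma(0)|$, and so $|y|\geq|\gamma(t)-\gamma(0)|-C\geq t/B(g)-C$. Choosing $T=T(C,\alpha,g)$ so that $T/B(g)-C\geq M$ finishes the argument, and the dependencies of $\varepsilon$ and $T$ match those announced.

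I do not foresee a genuine obstacle here: the proof is essentially a careful bookkeeping of constants on top of Proposition~\ref{lemma_Rdist} and Remark~\ref{uniform dis}, both of which are imported as black boxes. The only subtle point worth stating cleanly is that the single lower bound on $\text{dist}_{|\cdot|}(y,\partial\mathfrak{T})$ simultaneously certifies $y\in\mathfrak{T}$ and the scaling inequality defining $\mathfrak{T}^{\varepsilon}$, so membership in $\mathfrak{T}$ need not be verified as a separate step.
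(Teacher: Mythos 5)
Your argument is correct and follows essentially the same route as the paper's proof: both rest on the linear growth $|\gamma(t)-\gamma(0)|\geq t/B(g)$ from Proposition~\ref{lemma_Rdist}, the uniform bound $D(g)$ on the deviation from $\overline{\alpha}$ from Remark~\ref{uniform dis}, and the positive angular gap between $\alpha$ and the boundary rays of $\mathfrak{T}$ (your $c_0$ is the paper's $\theta_\alpha$, since for points of the cone the distance to $\partial\mathfrak{T}$ is the minimum of the distances to $\overline{m}^{\pm}$). If anything, your bookkeeping is slightly more careful than the paper's, since you verify membership $y\in\mathfrak{T}$ explicitly and phrase the final inequality in terms of $|y|=|x-\gamma(0)|$ as the definition of $\mathfrak{T}^{\varepsilon}$ requires.
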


\begin{proof}
Since $\gamma:[0,\infty)\rightarrow \mathbb{R}^2$ is parameterized by $g_R$-arc length, then applying Proposition \ref{lemma_Rdist}, for any $t>0$,
\begin{equation}\label{eq_dist1}
|\gamma(0)-\gamma(t)|\geq\frac{t}{B(g)}.
\end{equation}
By Remark \ref{uniform dis}, there exists a number $D(\alpha,g)<\infty$ such that for any $t>0$,
\begin{equation}\label{eq_dist2}
\text{dist}_{|\cdot|}(\gamma(t)-\gamma(0),\overline{\alpha})\leq D(g).
\end{equation}
Hence there is $h\in\overline{\alpha}$ such that $|h|\geq|\gamma(0)-\gamma(t)|-D(g)$. Since $\alpha\in (m^-,m^+)$,
\begin{equation}\label{eq_dist3}
\theta_\alpha:=\text{dist}_{|\cdot|}(\alpha,\overline{m}^{\pm})>0.
\end{equation}
By \eqref{eq_dist1}-\eqref{eq_dist3}, for fixed $C>0$ set
$$
T:=\frac{2B(g)}{\theta_\alpha}[(\theta_\alpha+1)D(g)+C].
$$
If $t\geq T$ and $x\in B^{|\cdot|}_C(\gamma(t))$, then dist$_{|\cdot|}(x-\gamma(0),\overline{m}^{\pm})\geq\frac{\theta_\alpha}{2}|\gamma(t)-\gamma(0)|$. We choose $\varepsilon=\frac{\theta_\alpha}{2}$ to complete the proof.
\end{proof}

\begin{Lem}\label{lemma_p_bounded}
Let $\alpha\in(m^-,m^+)$ and $\gamma:[0,\infty)\rightarrow \mathbb{R}^2$ be a timelike ray in $\mathscr{R}_\alpha$. Let $\{t_i\}_{i\in\mathbb{Z}_+}$ be a sequence with $\lim_{i\rightarrow \infty}t_i=\infty$ and $\{\mathbf{k}_i\}_{i\in\mathbb{Z}_+}\subseteq\mathbb{Z}^{2}$ be a sequence of integer vectors such that there exists a constant $C>0$ satisfying $T_{\mathbf{k}_i}\circ\gamma(t_i)\in B^{|\cdot|}_C(\gamma(0))$ for all $i\in\mathbb{Z}_+$. Then there exists a constant $Q=Q(C,\alpha,g)$ such that
\begin{equation*}
\text{dist}_{|\cdot|}(T_{\mathbf{k}_i}\circ\gamma(0)-T_{\mathbf{k}_j}\circ\gamma(0),\overline{\alpha})\leq Q
\end{equation*}
for all $1\leq i<j$.
\end{Lem}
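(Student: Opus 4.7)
The plan is straightforward once one unpacks the definitions. The hypothesis says that $\mathbf{k}_i$ differs from the vector $\gamma(0)-\gamma(t_i)$ by at most $C$ in Euclidean norm, so the integer vector $\mathbf{k}_i-\mathbf{k}_j$ is within $2C$ of the displacement $\gamma(t_j)-\gamma(t_i)$ along $\gamma$. Since $\gamma \in \mathscr{R}_\alpha$, this displacement lies within a fixed Euclidean distance of the ray $\overline{\alpha}$, and combining the two estimates yields the required bound.

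More precisely, here are the steps. First, by extracting a subsequence (using $t_i \to \infty$), I may assume the sequence $\{t_i\}$ is strictly increasing in $i$; this is harmless for establishing the existence of a uniform constant $Q$ and lets me work with the natural orientation of the ray. Second, rewrite the hypothesis $T_{\mathbf{k}_i}\circ\gamma(t_i)\in B^{|\cdot|}_C(\gamma(0))$ as
\begin{equation*}
|\gamma(t_i)+\mathbf{k}_i-\gamma(0)|\leq C,
\end{equation*}
so that the Euclidean triangle inequality gives
\begin{equation*}
\bigl|(\mathbf{k}_i-\mathbf{k}_j)-(\gamma(t_j)-\gamma(t_i))\bigr|\leq 2C
\end{equation*}
for all $i<j$. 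Third, invoke Definition \ref{def ap} together with Remark \ref{uniform dis} to obtain a constant $D=D(\alpha,g)<\infty$ (depending only on $\alpha$ and $g$, not on the particular ray) such that
\begin{equation*}
\text{dist}_{|\cdot|}\bigl(\gamma(t_j)-\gamma(t_i),\overline{\alpha}\bigr)\leq D
\end{equation*}
whenever $t_i<t_j$. Finally, combine the two displays via a last triangle inequality to conclude
\begin{equation*}
\text{dist}_{|\cdot|}(\mathbf{k}_i-\mathbf{k}_j,\overline{\alpha})\leq 2C+D,
\end{equation*}
so we may set $Q:=2C+D(\alpha,g)$.

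There is no real obstacle here: the entire content is a triangle-inequality comparison between the three vectors $\mathbf{k}_i-\mathbf{k}_j$, $\gamma(t_j)-\gamma(t_i)$, and the ray $\overline{\alpha}$, together with the uniform asymptotic-direction bound recorded in Remark \ref{uniform dis}. The only choice to be made is to reduce to a monotone subsequence at the outset so that the sign of $\gamma(t_j)-\gamma(t_i)$ aligns with $\overline{\alpha}$ rather than with $-\overline{\alpha}$; without this reduction the conclusion, as stated, would fail whenever $t_i>t_j$.
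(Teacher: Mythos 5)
Your proof is correct and follows essentially the same route as the paper's: rewrite the hypothesis as $|\mathbf{k}_i-(\gamma(0)-\gamma(t_i))|\le C$, compare $\mathbf{k}_i-\mathbf{k}_j$ with $\gamma(t_j)-\gamma(t_i)$ via the triangle inequality, and invoke the uniform constant $D(\alpha,g)$ from Remark \ref{uniform dis} to get $Q=2C+D$. Your preliminary reduction to a monotone subsequence of $\{t_i\}$ is a legitimate point the paper glosses over (its displayed chain even writes $\gamma(t_i)-\gamma(t_j)$ with the opposite sign relative to the ray $\overline{\alpha}$), and it is harmless for the intended application in Proposition \ref{prop4}, where a further subsequence is extracted anyway.
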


\begin{proof}
Using Remark \ref{uniform dis}, there exists a constant $D(g)<\infty$ such that for all $s>t\geq 0$,
\begin{equation}\label{eq_bound1}
\text{dist}_{|\cdot|}(\gamma(s)-\gamma(t),\overline{\alpha})\leq D(g).
\end{equation}
For any $1\leq i<j$, by assumption we have
\begin{equation}\label{eq_bound2}
\begin{split}
|T_{\mathbf{k}_i}\circ\gamma(0)-T_{\mathbf{k}_i}\circ T_{\mathbf{k}_j}\circ\gamma(t_{j})|\leq C,\\
|T_{\mathbf{k}_j}\circ\gamma(0)-T_{\mathbf{k}_j}\circ T_{\mathbf{k}_i}\circ\gamma(t_{i})|\leq C.
\end{split}
\end{equation}
Thus, by using \eqref{eq_bound1}, \eqref{eq_bound2} and the commutativity of translations $\Gamma$, we have
\begin{eqnarray*}
  &&\text{dist}_{|\cdot|}(T_{\mathbf{k}_i}\circ\gamma(0)-T_{\mathbf{k}_j}\circ\gamma(0),\overline{\alpha}) \\
  &\leq & \text{dist}_{|\cdot|}(T_{\mathbf{k}_j}\circ T_{\mathbf{k}_i}\circ\gamma(t_{i})-T_{\mathbf{k}_i}\circ T_{\mathbf{k}_j}\circ\gamma(t_{j}),\overline{\alpha})+2C \\
  &=&\text{dist}_{|\cdot|}(\gamma(t_i)-\gamma(t_j),\overline{\alpha})+2C\\
  &\leq& Q:= D(g)+2C. \\
\end{eqnarray*}
\end{proof}

Now we formulate the main result of this section.
\begin{Pro} \label{prop4}
Let $p$ be a timelike pole on $(\mathbb{R}^2,g)$. Then for every unit speed timelike ray $\gamma\in\mathscr{R}_{\alpha}$ starting from $p$ and every sequence $\{t_i\}_{i\in\mathbb{Z}_+}$ with $\lim_{i\rightarrow\infty}t_i=\infty$, there exists a solution $u\in C^{1,1}(\mathbb{R}^2,\mathbb{R})$ to the Equation \eqref{te} with the following property: There exists a sequence $\{\mathbf{k}_i\}_{i\in\mathbb{Z}^{+}}\in\Z^{2}$ such that the sequence of geodesics $t\rightarrow T_{\mathbf{k}_i}\circ \gamma(t+t_i)$ converges to an integral curve of $\nabla u$.
\end{Pro}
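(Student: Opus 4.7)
The plan is to use the $\Z^2$-action on $\R^2$ to bring the points $\gamma(t_i)$ back into a bounded region, producing shifted timelike poles $p_i$ for which Proposition \ref{prop3} applies (after an additive normalization), and then to identify the limit of the translated geodesics as an integral curve of the resulting solution $u$.

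\textbf{Setup.} For each $i$ we pick $\mathbf{k}_i\in\Z^2$ so that $T_{\mathbf{k}_i}\gamma(t_i)$ lies in a fixed compact fundamental domain, and set
\[
\tilde\gamma_i(s):=T_{\mathbf{k}_i}\gamma(s+t_i),\qquad p_i:=T_{\mathbf{k}_i}(p)=\tilde\gamma_i(-t_i).
\]
By the isometry property of $T_{\mathbf{k}_i}$, each $\tilde\gamma_i$ is a unit-speed future-directed timelike ray from the timelike pole $p_i$ with asymptotic direction $\alpha$. Passing to a subsequence we may assume $\tilde\gamma_i(0)\to q_\ast\in\R^2$, and we fix rational $\alpha^\pm\in(m^-,m^+)$ with $\alpha^-<\alpha<\alpha^+$.

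\textbf{Securing condition $(\ast)$.} Lemma \ref{lemma_p_bounded} gives $\mathrm{dist}_{|\cdot|}(p_i-p_j,\overline{\alpha})\le Q$. Because $|\gamma(t_i)|\to\infty$ forces $|\mathbf{k}_i|\to\infty$ and hence $|p_i|\to\infty$, a further diagonal extraction arranges $|p_{i+1}-p_i|\to\infty$; combined with the Lemma, $(p_i-p_{i+1})/|p_i-p_{i+1}|\to\alpha$. Since $\alpha$ is strictly timelike and lies strictly between $\alpha^-$ and $\alpha^+$, Proposition \ref{future} gives $p_{i+1}\in I^-(p_i)$ with $d(p_{i+1},p_i)\ge 1$ for $i$ large, while the angular convergence places $p_{i+1}\in D^-_{\alpha^-,\alpha^+}(p_i)$. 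The covering condition follows by the same mechanism: for any $x\in\R^2$, $(x-p_i)/|x-p_i|\to\alpha$, so $x-p_i$ eventually lies in $\mathfrak{T}^\varepsilon$ strictly between $\alpha^\pm$, and Proposition \ref{future} yields $x\in I^+(p_i)$, placing $x$ in $D^+_{\alpha^-,\alpha^+}(p_i)$.

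\textbf{Limit solution and limit geodesic.} We set $u_i:=-d_{p_i}+d_{p_i}(q_\ast)$, so that $u_i$ still solves \eqref{te} on $I^+(p_i)$ with $\nabla u_i=-\nabla d_{p_i}$ a future-directed unit timelike field, and $u_i(q_\ast)=0$. The proof of Proposition \ref{prop3} (unaffected by an additive constant) yields a subsequence $u_{i_k}\to u\in C^1(\R^2)$ with $\nabla u_{i_k}\to\nabla u$ locally uniformly; the equi-Lipschitz bound of Proposition \ref{prop2} (see Remark \ref{uniform Lip}) upgrades the regularity to $u\in C^{1,1}$. On $[0,\infty)$ each $\tilde\gamma_i$ satisfies $(u_i\circ\tilde\gamma_i)'(s)=-1=g(\nabla u_i,\dot{\tilde\gamma}_i)$ together with $|\dot{\tilde\gamma}_i|_g=|\nabla u_i|_g=1$, so the equality case of the reverse Cauchy--Schwarz inequality for future-directed unit timelike vectors forces $\dot{\tilde\gamma}_i=\nabla u_i$ along $\tilde\gamma_i$. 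Standard ODE stability (locally uniform convergence of a vector field with a uniform Lipschitz constant plus initial-data convergence) then gives convergence of $\tilde\gamma_{i_k}$, uniformly on compact subsets of $\R$, to the integral curve $\gamma_\ast$ of $\nabla u$ with $\gamma_\ast(0)=q_\ast$; the limit-curve lemma \ref{lemma_limit_curve} handles the negative-parameter portion where $\tilde\gamma_i$ is initially defined only on $[-t_i,\infty)$ with $t_i\to\infty$.

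The main obstacle is the second step: securing all three parts of condition $(\ast)$ simultaneously for a single subsequence. All three hinge on the same mechanism (Lemma \ref{lemma_p_bounded} together with Proposition \ref{future}), but the diagonal extraction requires some care to ensure the angular containments and the Lorentzian distance lower bound all hold past a common threshold.
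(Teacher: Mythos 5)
Your proposal follows essentially the same route as the paper: translate by deck transformations so that $\gamma(t_i)$ returns to a bounded set, verify condition $(*)$ for the shifted poles $p_i=T_{\mathbf{k}_i}(p)$ via Lemmas \ref{lemma_e_cone} and \ref{lemma_p_bounded} together with Proposition \ref{future}, apply Proposition \ref{prop3}, and identify the limit of the translated rays as an integral curve of $\nabla u$. The only (harmless) deviations are your choice of normalizing constant ($u_i(q_\ast)=0$ in place of the paper's $u_i=t_i-d_{p_i}$, whose boundedness at $p$ the paper checks with the Lipschitz estimate of Theorem \ref{Su4:d-Lip}) and your use of Lipschitz ODE stability in the last step, where the paper instead invokes the limit curve lemma and the calibration identity $u(\tilde{\gamma}(t))-u(\tilde{\gamma}(s))=s-t$.
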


\begin{proof}
Since $\Gamma$ acts cocompactly on $\mathbb{R}^2$, there exists a constant $0<C<\infty$ satisfying that for each $i\in\mathbb{Z}_+$ there exists $\mathbf{k}_i\in\Z^{2}$ such that
\begin{equation}\label{eq_prop4_1}
\text{dist}_{|\cdot|}(p,T_{\mathbf{k}_i}\circ\gamma(t_i))\leq C.
\end{equation}
Since $\gamma\in\mathscr{R}_\alpha$ and $\lim_{i\rightarrow\infty}t_i=\infty$, then for any $T>0$ there exist $N=N(T)>0$ such that for all $i>N$,
\begin{equation}\label{eq_prop4_2}
L^{|\cdot|}(\gamma|_{[0,t_i]})>T.
\end{equation}

Thus by \eqref{eq_prop4_1}, \eqref{eq_prop4_2} and Lemma \ref{lemma_e_cone}, there exist positive constants $\varepsilon=\varepsilon(\alpha,g)$, $T=T(C,\alpha,g)$, $N=N(T)$ such that for all $i>N$ and $x\in B^{|\cdot|}_{2C}(\gamma(t_i))$,
\begin{equation*}
x-\gamma(0)\in \mathfrak{T}^\varepsilon.
\end{equation*}
Note that $T_{\mathbf{k}_i}\in\Gamma$ is also a Euclidean isometry and $B^{|\cdot|}_C(p)\subseteq B^{|\cdot|}_{2C}(T_{\mathbf{k}_i}\circ\gamma(t_i))$, hence for all $i>N$ and $x\in B^{|\cdot|}_C(p)$, we have
\begin{equation}\label{eq_prop4_3}
x-T_{\mathbf{k}_i}(p)\in \mathfrak{T}^\varepsilon.
\end{equation}
Therefore, by \eqref{eq_prop4_3} and Theorem \ref{Su4:d-Lip}, there are $L(\varepsilon)<\infty$ and $N'$ (larger than $N$) such that $d(T_{\mathbf{k}_i}(p),x)$ is $L(\varepsilon)$-Lipschitz on $B^{|\cdot|}_C(p))$ for all $i\geq N'$.
Thus, for all $i\geq N'$, we have
\begin{equation}\label{eq_prop4_4}
\begin{split}
|d(T_{\mathbf{k}_i}(p),p)-t_i|=|d(T_{\mathbf{k}_i}(p),p)-d(T_{\mathbf{k}_i}(p),T_{\mathbf{k}_i}\circ\gamma(t_i))|\\
\leq L(\varepsilon)\cdot|p-T_{\mathbf{k}_i}\circ\gamma(t_i)|\leq L(\varepsilon)\cdot C.
\end{split}
\end{equation}

For each $i\in\mathbb{Z}_+$, denote by $\gamma_i:[-t_i,\infty)\rightarrow \mathbb{R}^2$, $\gamma_i(t)=T_{\mathbf{k}_i}\circ \gamma(t_i+t)$, then for any $t\geq -t_i, d(\gamma_i(-t_i),\gamma_i(t))=d(T_{\mathbf{k}_i}\circ\gamma(0),T_{\mathbf{k}_i}\circ \gamma(t_i+t))=t_i+t$. Let $u_i: I^+(T_{\mathbf{k}_i}(p))\rightarrow\mathbb{R}$ be defined by
\begin{displaymath}
u_i(x):=t_i-d(T_{\mathbf{k}_i}(p),x),
\end{displaymath}
then $u_i$ is a smooth solution to the timelike eikonal equation \eqref{te} and $u_i(\gamma_i(t))=t_i-d(T_{\mathbf{k}_i}(p),\gamma_i(t))=-t$ for all $t\geq -t_i$.
Since \eqref{eq_prop4_4} implies $|u_i(p)|=|t_i -d(T_{\mathbf{k}_i}(p),p)|\leq C\cdot L(\varepsilon)$ when $i\geq N'$, the sequence $\{u_i(p)\}_{i\in\mathbb{Z}_+}$ has a convergent subsequence.

From Lemma \ref{lemma_p_bounded}, all $T_{\mathbf{k}_i}(p)$ has a bounded distance from the line $\overline{\alpha}$. Note that for any timelike line $\zeta$ with asymptotic direction $\alpha$, we have $I^\pm(\zeta)=\mathbb{R}^2$. We choose $\alpha^\pm\in (m^-,m^+)$ such that $\alpha^-<\alpha<\alpha^+$. Then, by taking a subsequence, Lemma \ref{lemma_e_cone} implies that $\{T_{\mathbf{k}_i}(p)\}$ satisfies the condition $(*)$ in Proposition \ref{prop3}.
Thus by Proposition \ref{prop3}, there exists a subsequence $\{u_{i_k}\}$ of $\{u_i\}$ converges to a solution $u\in C^{1,1}(\mathbb{R}^2,\mathbb{R})$ to the Lorentzian eikonal equation in the $C^1$ topology.

Finally, since dist$_{|\cdot|}(p,\gamma_i(0))\leq C$, by taking a subsequence, we could also assume that $\gamma_{i_k}|_{[-t_{i_k},\infty)}$ converges to a geodesic $\tilde{\gamma}:\mathbb{R}\rightarrow M$. (use Limit Curve Lemma \ref{lemma_limit_curve}) For any $[s,t]\subseteq \mathbb{R}$, we have
\begin{equation}
u(\tilde{\gamma}(t))-u(\tilde{\gamma}(s))=\lim_{k\rightarrow \infty} [u_{i_k}(\gamma_{i_k}(t))-u_{i_k}(\gamma_{i_k}(s))]=s-t.
\end{equation}
Thus, we obtain that $(u\circ \tilde{\gamma})'(t)=-1=g(\nabla u|_{\tilde{\gamma}(t)},\dot{\tilde{\gamma}}(t))$. By $g(\dot{\tilde{\gamma}}(t),\dot{\tilde{\gamma}}(t))=-1$, we have $\dot{\tilde{\gamma}}(t)=\nabla u|_{\tilde{\gamma}(t)},$ which concludes that $\tilde{\gamma}$ is an integral curve of $\nabla u.$
\end{proof}

\section{Proof of the Main Results}
Let $(\R^2,g)$ be the Abelian cover of the class A 2-torus and $p$ be a timelike pole on it. Observe that $T_{\mathbf{k}}(p)=p+\mathbf{k}\in\R^2$ is a timelike pole for any $\mathbf{k}\in\mathbb{Z}^{2}$, thus every future inextendible timelike geodesic $\gamma$ starting from $T_{\mathbf{k}}(p)$ is a timelike ray, i.e. $\gamma$ maximizes the $g$-arclength between any two of its points.

\begin{Pro} \label{prop-irrational}
Let $(\R^2,g)$ be the Abelian cover of the class A 2-torus $(\T^2,g)$ and $u:\R^2\rightarrow\R$ be a $C^{1,1}$ solution to the Lorentzian eikonal equation. If the integral curves of $\nabla u$ have irrational asymptotic direction, then the vector field $\nabla u$ is $\Z^2$-invariant.
\end{Pro}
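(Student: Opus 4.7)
The idea is to prove $\mathbb{Z}^2$-invariance by a two-lines-through-a-point argument, exploiting the disjointness of distinct timelike lines with irrational asymptotic direction (Theorem \ref{Sc-2}(2)). Fix $\mathbf{k}\in\mathbb{Z}^2$ and set $v:=u\circ T_\mathbf{k}$, i.e.\ $v(x)=u(x+\mathbf{k})$. Because $T_\mathbf{k}$ is an isometry of $(\mathbb{R}^2,g)$ whose differential is the identity on tangent vectors, one has $\nabla v(x)=\nabla u(x+\mathbf{k})$ and $g(\nabla v,\nabla v)\equiv -1$; thus $v$ is again a $C^{1,1}$ solution to \eqref{te}. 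The desired conclusion $\nabla u(x)=\nabla u(x+\mathbf{k})$ becomes $\nabla v=\nabla u$.

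Next, fix an arbitrary $x_0\in\mathbb{R}^2$ and let $\gamma$, $\tilde{\gamma}$ denote the integral curves of $\nabla u$, $\nabla v$ respectively through $x_0$ at time $0$. Uniqueness is guaranteed because $u,v\in C^{1,1}$, so $\nabla u,\nabla v$ are locally Lipschitz. By Proposition \ref{prop1}, each curve is a maximal timelike geodesic parameterized by $g$-arclength. Moreover $t\mapsto \tilde{\gamma}(t)+\mathbf{k}$ is an integral curve of $\nabla u$, and Euclidean translation preserves asymptotic directions, so $\tilde{\gamma}$ inherits the same irrational asymptotic direction $\alpha$ that the integral curves of $\nabla u$ are assumed to have. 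Completeness of both curves then follows as in Remark \ref{rm1}: the timelike asymptotic direction and Theorem \ref{Su4:compact-result} put every tangent vector in $\mathrm{Time}(\mathbb{R}^2,g)^{1,\delta}$ for some $\delta=\delta(\alpha)$, and Proposition \ref{compact_bundle} then bounds $|\nabla u|$, $|\nabla v|$ uniformly, so the integral curves extend to all of $\mathbb{R}$. Thus $\gamma$ and $\tilde{\gamma}$ are timelike lines in the sense of Definition \ref{Def_line} with the same irrational asymptotic direction $\alpha$.

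Since both pass through the common point $x_0$, Theorem \ref{Sc-2}(2) forces them to coincide as subsets of $\mathbb{R}^2$. Both are future-directed unit-speed reparameterizations of this single line, so $\tilde{\gamma}(t)=\gamma(t+c)$ for some $c\in\mathbb{R}$. Because $(\mathbb{R}^2,g)$ is globally hyperbolic it contains no closed timelike curves, so $\gamma$ is injective; evaluating the reparameterization at $t=0$ forces $c=0$. Hence $\dot{\tilde{\gamma}}(0)=\dot{\gamma}(0)$, i.e.\ $\nabla v(x_0)=\nabla u(x_0)$. Since $x_0\in\mathbb{R}^2$ and $\mathbf{k}\in\mathbb{Z}^2$ were arbitrary, $\nabla u$ is $\mathbb{Z}^2$-invariant.

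\textbf{Expected main obstacle.} The one non-cosmetic point is making sure the integral curves are actually inextendible timelike lines, so that Theorem \ref{Sc-2}(2) is applicable; this is exactly where the hypothesis of timelike asymptotic direction, together with the compactness results recalled in Section 2, must be invoked. Everything else is a clean consequence of translation being an isometry and of the uniqueness of the line through a point with a prescribed irrational direction.
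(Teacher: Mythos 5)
Your proposal is correct and follows essentially the same route as the paper: integral curves of $\nabla u$ are maximal by Proposition \ref{prop1}, hence complete timelike lines with the given irrational direction via Theorem \ref{Su4:compact-result} and Proposition \ref{compact_bundle}, and the disjointness statement of Theorem \ref{Sc-2}(2) forces uniqueness of the line through each point. The paper compresses the final step ("since $g$ is the lift of a metric on $\mathbb{T}^2$...") into one sentence, and your explicit comparison of $\nabla u$ with $\nabla(u\circ T_{\mathbf{k}})$ is exactly the intended argument, spelled out.
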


\begin{proof}
From Proposition \ref{prop1}, the integral curves of $\nabla u$ are maximal. Since they possess irrational asymptotic direction, using Proposition \ref{compact_bundle} and Theorem \ref{Su4:compact-result}, we deduce that they are complete timelike lines. By Theorem \ref{Sc-2}(2), any two distinct timelike lines with the same irrational asymptotic direction are disjoint. Thus there is at most one maximal timelike line with direction $\alpha$ passing through any point of $\R^2$. Since $g$ is the lift of a Loretzian metric on $\mathbb{T}^{2}$, we concludes that $\nabla u$ is $\Z^2$-invariant.
\end{proof}

Using Proposition \ref{prop4}, we prove the existence of a $C^{1,1}$ solution to the Lorentzian eikonal equation with integral curves of a given direction.
\begin{Pro} \label{prop-main}
Let $(\mathbb{R}^2,g)$ be the Abelian cover of the class A 2-torus $(\mathbb{T}^2,g)$ having a timelike pole. Then for every asymptotic direction $\alpha\in(m^-,m^+)$, there exists a solution $\tilde{u}_{\alpha}\in C^{1,1}(\mathbb{R}^2,\mathbb{R})$ to the Equation \eqref{te} such that all integral curves of $\nabla\tilde{u}_{\alpha}$ have asymptotic direction $\alpha$.
\end{Pro}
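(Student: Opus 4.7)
The plan is to apply Proposition \ref{prop4} to a suitably chosen unit speed timelike ray $\gamma\in\mathscr{R}_\alpha$ emanating from the pole $p$, and then to upgrade its conclusion: Proposition \ref{prop4} yields only a single distinguished integral curve of $\nabla\tilde u_\alpha$ with direction $\alpha$, while the present statement requires \emph{every} integral curve to have direction $\alpha$.

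First I would establish existence of a timelike ray from $p$ with asymptotic direction $\alpha$. For rational $\alpha$ this is Theorem \ref{exist per}. For irrational $\alpha$, I would approximate by rational $\alpha_n\to\alpha$ in $(m^-,m^+)$, take the periodic lines $\gamma_{p,\alpha_n}$ through $p$, apply the limit curve Lemma \ref{lemma_limit_curve} (noting $\gamma_{p,\alpha_n}(0)=p$ is fixed, and maximality is preserved under $C^0$-limits in globally hyperbolic spacetimes), and use Theorem \ref{Sc-2}(3) to identify the asymptotic direction of the limit line with $\alpha$; its future half is the desired ray. Feeding $\gamma$ and any $t_i\to\infty$ into Proposition \ref{prop4} produces a $C^{1,1}$ solution $\tilde u_\alpha:\R^2\to\R$ of \eqref{te} together with integer translations $\mathbf{k}_{i_k}\in\Z^2$ and a subsequence for which $T_{\mathbf{k}_{i_k}}\circ\gamma(\cdot+t_{i_k})$ converges to an integral curve $\tilde\gamma$ of $\nabla\tilde u_\alpha$; by Theorem \ref{Sc-2}(3) this $\tilde\gamma$ has asymptotic direction $\alpha$.

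The main task, and the expected obstacle, is to show the same holds for \emph{any} integral curve $\eta$ of $\nabla\tilde u_\alpha$. Given $x\in\R^2$ and the integral curve $\eta$ through $x$, Proposition \ref{prop1} gives maximality, and Proposition \ref{compact_bundle} bounds the Euclidean norm of the unit timelike vector field $\nabla\tilde u_\alpha$, so $\eta$ extends to a complete timelike line. The $C^1$-convergence $\nabla u_{i_k}\to\nabla\tilde u_\alpha$ from Proposition \ref{prop3} and the equi-Lipschitz estimate of Proposition \ref{prop2}, together with standard ODE continuous dependence, realize $\eta$ as the locally uniform limit of the integral curves $\eta_{i_k}$ of $\nabla u_{i_k}$ through $x$; these are precisely the maximal timelike rays from $T_{\mathbf{k}_{i_k}}(p)$ through $x$.

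Let $\alpha_k\in[m^-,m^+]$ denote the asymptotic direction of $\eta_{i_k}$. Definition \ref{def ap} together with Remark \ref{uniform dis} places the displacement $x-T_{\mathbf{k}_{i_k}}(p)$ in a Euclidean tube of uniformly bounded width around $\overline{\alpha_k}$. On the other hand, the construction inside Proposition \ref{prop4} (via $T_{\mathbf{k}_{i_k}}\circ\gamma(t_{i_k})\in B^{|\cdot|}_C(p)$ and Lemma \ref{lemma_p_bounded}) forces the same displacement $x-T_{\mathbf{k}_{i_k}}(p)$ to lie in a bounded Euclidean tube around $\overline\alpha$ whose point of evaluation has Euclidean magnitude tending to infinity with $k$. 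Two straight rays from the origin whose bounded tubular neighbourhoods share arbitrarily distant points must have arbitrarily close directions, so $\alpha_k\to\alpha$; Theorem \ref{Sc-2}(3) then yields that $\eta=\lim_k\eta_{i_k}$ has asymptotic direction $\alpha$. The delicate point is precisely this forcing: it relies on the sequence of starting poles $\{T_{\mathbf{k}_{i_k}}(p)\}$ drifting to infinity exactly along $-\alpha$, a fact built into the construction of $\tilde u_\alpha$, which pins down the past behaviour of every integral curve uniformly in $x$.
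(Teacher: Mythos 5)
Your proposal is correct and its first two steps coincide with the paper's: construct a timelike ray $\gamma\in\mathscr{R}_\alpha$ from the pole $p$ (rational case via Theorem \ref{exist per}; irrational case by approximating with periodic lines $\gamma_{p,\alpha_i}$ and passing to a limit — the paper extracts the limit via compactness of initial velocities from Theorem \ref{Su4:compact-result} and ODE dependence on initial data rather than the limit curve Lemma \ref{lemma_limit_curve}, but this is immaterial), and then feed $\gamma$ into Proposition \ref{prop4}. Where you genuinely diverge is the final step, showing that \emph{every} integral curve of $\nabla\tilde u_\alpha$ has direction $\alpha$. The paper disposes of this softly: each integral curve is a maximal inextendible timelike curve, hence a timelike line with some asymptotic direction in $[m^-,m^+]$ by Theorem \ref{Sc-2}(1); two lines in the plane confined to bounded tubes around rays with \emph{distinct} directions must cross, while distinct integral curves of the Lipschitz field $\nabla\tilde u_\alpha$ are disjoint by uniqueness of ODE solutions — contradiction. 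You instead run a quantitative argument: realize the integral curve through $x$ as the $C^0$-limit of the radial rays $\eta_{i_k}$ from $T_{\mathbf{k}_{i_k}}(p)$ through $x$, and pin their directions $\alpha_k$ by observing that the displacement $x-T_{\mathbf{k}_{i_k}}(p)$ has Euclidean norm tending to infinity yet lies in bounded tubes around both $\overline{\alpha_k}$ (Remark \ref{uniform dis}) and $\overline{\alpha}$ (Lemma \ref{lemma_p_bounded} plus $T_{\mathbf{k}_{i_k}}\circ\gamma(t_{i_k})\in B^{|\cdot|}_C(p)$), forcing $\alpha_k\to\alpha$. Both arguments are sound; the paper's is shorter but leans on the planar crossing lemma, while yours is self-contained and makes the mechanism (the poles receding to infinity along $-\alpha$) explicit, at the cost of more bookkeeping. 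Two small points of hygiene in your write-up: Theorem \ref{Sc-2}(3) as stated runs in the opposite logical direction (it assumes the limit's direction and concludes convergence of the $\alpha_k$), so at the two places you invoke it you should instead pass to the limit in the uniform tube estimate of Remark \ref{uniform dis}, which is what actually identifies the direction of a $C^0$-limit; and you should note explicitly that $\eta_{i_k}$ is eventually defined on any fixed compact interval because $d(T_{\mathbf{k}_{i_k}}(p),x)\to\infty$.
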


\begin{proof}
Let $p\in\mathbb{T}^2$ be any timelike pole on $\mathbb{R}^2$. First we show that for any $\alpha\in(m^-,m^+)$, there exists a timelike ray $\gamma\in\mathscr{R}_{\alpha}$ starting from $p$. The case when $\alpha$ is rational is proved directly by Theorem \ref{exist per}. If $\alpha$ is irrational, we choose a sequence of rational $\alpha_i\in (m^-,m^+)$ such that $\alpha_i\rightarrow \alpha$ with respect to the topology defined on $(m^-,m^+)$. For each $i\in\mathbb{Z}_+$, let $\gamma_i$ be the unique periodic timelike line passing through $p=\gamma_{i}(0)$ with asymptotic direction $\alpha_i$. Thus Theorem \ref{Su4:compact-result} implies that $\{\dot{\gamma}_i(0)\}_{\mathbb{Z}_{+}}$ lies in a compact set $\text{Time}(\mathbb{R}^2,g)^{1,\delta}_{p}$ for some $\delta>0$. Hence, by taking a subsequence, $\dot{\gamma}_i(0)$ converges to some tangent vector $v\in \text{Time}(\mathbb{R}^2,g)^{1,\delta}_{p}$. Since $\gamma_i$ are geodesics, they satisfy the geodesic equations. By the continuous dependence of solutions of ODE on initial data, $\gamma_i$ converges to some timelike line $\gamma$ in the $C^0$ topology. From Theorem \ref{Sc-2}(3), the asymptotic direction is continuous w.r.t. the $C^0$ topology on the space of causal lines. Thus, $\gamma$ has asymptotic direction $\lim_{i\rightarrow \infty}\alpha_i=\alpha$.

Applying Proposition \ref{prop4} to $\gamma:[0,\infty)\rightarrow\mathbb{R}^{2}$ constructed above, we obtain a Lipschitz solution $\tilde{u}_{\alpha}\in C^{1,1}(\mathbb{R}^2,\mathbb{R})$ to Equation \eqref{te} with a sequence $\mathbf{k}_i\in\Z^{2}$ such that $T_{\mathbf{k}_i}\circ\gamma$ converges to a integral curve $\tilde{\gamma}$ of $\nabla\tilde{u}_{\alpha}$. By Proposition \ref{prop2}, $\tilde{\gamma}$ is maximal and has asymptotic direction $\alpha$. If another integral curve of $\nabla\tilde{u}_{\alpha}$ has a different asymptotic direction, they must intersect each other. This contradicts the fact that $\tilde{u}_{\alpha}$ is of $C^{1}$. Thus all integral curves of $\nabla\tilde{u}_{\alpha}$ are timelike lines with the same asymptotic direction $\alpha$.
\end{proof}

\textit{Proof of Theorem A:}

If the asymptotic direction $\alpha$ is irrational, we first apply Proposition \ref{prop-main} to construct $u_{\alpha}:=\tilde{u}_{\alpha}$. Our conclusion that $\nabla u_{\alpha}$ is $\mathbb{Z}^{2}$ periodic follows directly from Proposition \ref{prop-irrational}.

If $\alpha$ is rational, there is a sequence of irrational asymptotic direction $\alpha_i\in(m^-,m^+)$ converging to $\alpha$ as $i\rightarrow\infty$. By the conclusion in the irrational case, for each $i\in\N$, there exists a solution $u_{i}:=u_{\alpha_{i}}\in C^1(\R^2,\R)$ to Equation \eqref{te} so that
\begin{enumerate}
  \item the integral curves of $\nabla u_{i}$ have asymptotic direction $\alpha_i$,
  \item $\nabla u_{i}$ is $\mathbb{Z}^{2}$ periodic.
\end{enumerate}
By adding a constant to $u_{\alpha_{i}}$, we can suppose $u_i(x)=0$ for some fixed point $x\in\R^2$ and for all $i\in\N$. By Remark \ref{uniform Lip}, $u_{i}$ are uniformly Lipschitz with respect to the Euclidean metric. Thus there exists a subsequence $\{u_{i_{k}}\}_{k\in\N}$ converges to a function $u_{\alpha}\in C^{1}(\R^2,\R)$ in $C^{1}$ topology. This implies that $u_{\alpha}$ is a Lipschitz solution to the timelike eikonal equation \eqref{te}. We also deduce that $\nabla u_{\alpha}$ is $\mathbb{Z}^{2}$ periodic and that all integral curves of $\nabla u_{\alpha}$ have asymptotic direction $\alpha$ since these conclusions are satisfied by $u_{i}$ and closed under $C^{1}$ topology. This completes our proof.\qed

Finally, we note that
\begin{Rem}
In our former work \cite{Jin-Cui}, we prove that for any irrational direction $\alpha\in(m^{-},m^{+}),\mathscr{R}_{\alpha}$ fills $(\R^{2},g)$. Thus there is at most one global solution $u$ to Equation \eqref{te} with all integral curves of $\nabla u$ possessing direction $\alpha$. In the irrational case, $u_{\alpha}$ can be constructed as the Lorentzian Busemann function for timelike lines with direction $\alpha$.

Moreover, for the rational case, one can use Propositions \ref{prop2} and \ref{prop3} to deduce that if timelike poles appear, $b^{\pm}_{\alpha}$ (in fact $b_{\gamma}$, where $\gamma$ is any periodic timelike line starting from a timelike pole $p$) constructed in \cite{Jin-Cui} are in fact $C^{1,1}$. The conclusion there applies to our case to imply that $u_{\alpha}$ is unique when $\alpha$ is rational if and only if there is a foliation of periodic timelike lines with direction $\alpha$ on $(\R^{2},g)$.
\end{Rem}

\end{document}